\newtheorem{lemma}{Lemma}[section]
\newtheorem{corollary}[lemma]{Corollary}
\newtheorem{proposition}[lemma]{Proposition}
\newtheorem{theorem}[lemma]{Theorem}
\theoremstyle{definition}
\newtheorem{definition}[lemma]{Definition}
\newtheorem{remark}[lemma]{Remark}
\newtheorem{example}[lemma]{Example}
\DeclareMathOperator\Rauz{Rauz} 
\DeclareMathOperator{\prefix}{prefix}
\DeclarePairedDelimiter\abs{\lvert}{\rvert} 
\newcommand*\A{{\mathcal A}} 
\newcommand*\Lang{{\mathcal L}} 
\newcommand*{\field}[1]{\mathbf{#1}} 
\newcommand*\orR{{\vec\Rauz}} 
\newcommand*{\setst}[2]{\{#1\ |\ #2\}}
\newcommand*\Z{\field{Z}} 
\newcommand*\N{\field{N}} 
\newcommand*\R{\field{R}} 
\newcommand*\Q{\field{Q}} 
\newcommand*\G{\mathcal{G}} 
\newcommand*{\titre}{Limits of Rauzy graphs of languages with subexponential complexity}
\title{\titre}
\author{Paul-Henry Leemann, Tatiana Nagnibeda,\\ Alexandra Skripchenko, Georgii Veprev}
\date{\today}
\begin{document}
\maketitle
\begin{abstract}
To a subshift over a finite alphabet, one can naturally associate an infinite family of finite graphs, called its Rauzy graphs. 
We show that for a subshift of unbounded subexponential complexity the Rauzy graphs converge in the sense of Benjamini-Schramm if and only if their empirical spectral measures converge, which is equivalent to the complexity function $p(n)$ to satisfy $\lim_n\frac{p(n+1)}{p(n)} = 1$. The limit in this case is the infinite line $\mathbf Z$.
If the subshift is moreover uniquely ergodic, the limit of labelled Rauzy digraphs if it exists can be identified with the unique invariant measure. We also consider a non uniquely ergodic example recently studied by Cassaigne and Kabor\'e and identify a continuum of invariant measures with subsequential limits of labelled Rauzy digraphs. 
\end{abstract}
\section{Introduction}

One of the central questions in the study of a dynamical system is that of describing its invariant measures. Many dynamical systems admit a natural symbolic representation as subshifts, i.e., acting as a shift on an invariant subspace in the space of infinite words over a finite alphabet. A popular way to study their complexity is by associating to the subshift an infinite family of finite graphs that describe the local structure of its orbits; they were introduced by G.~Rauzy in \cite{R} and are called the \emph{Rauzy graphs}. The purpose of our work is to connect the Rauzy graphs associated with a dynamical system with its invariant measures and to provide a new, graphical realization of the latter by considering the Benjamini-Schramm limits of the Rauzy graphs.  

Given an arbitrary sequence of finite graphs of uniformly bounded degree,  
I.~Benjamini and O.~Schramm~\cite{BeSch} equipped it with a natural distributional limit 
which has since become a central object of investigation in many different contexts under the name of Benjamini-Schramm limit. The limiting object is not a graph, but rather a probability measure on the space of (isomorphism classes of) rooted graphs obtained by choosing uniformly at random a root in each of the graphs in the sequence and by considering all possible limits in the space of rooted graphs with local topology. The key fact is that it is unimodular; see Section \ref{Section:Rauzy} for a precise definition. In this paper, when talking about convergence of graphs, we will always mean convergence in the sense of Benjamini-Schramm. In the special case when the limiting measure of a sequence of finite graphs $(G_i)_i$ is supported on one (unimodular, hence transitive) graph $G$ we will say that $(G_i)_i$ converge to $G$.

Applying this construction to a sequence of Rauzy graphs associated with a subshift, it is natural to ask how their Benjamini-Schramm limit is related to the original dynamical system and its invariant measures. In this paper we provide some answers to this question in the case of subshifts of subexponential complexity. It turns out that in this case convergence of graphs can be detected by spectral convergence.
In various examples, the Benjamini--Schramm limits of Rauzy graphs give us an alternative description of the invariant measures for the original dynamical systems (see Section~\ref{Sec:Applications}). 

Let $\Lang$ be a language over a finite alphabet $\mathcal A$ and denote by $p(n)$ its \emph{complexity function}; it counts the number of words of length $n$ that belong to $\Lang$. Let us recall that $\Lang$ is \emph{factorial} if it is closed under taking subwords and \emph{prolongable} if for every word $w\in\Lang$ there exists letters $a$ and $b$ in the alphabet $\mathcal A$ such that $awb$ is still in $\Lang$. A language is \emph{almost prolongable} if the proportion of prolongable words among all words of length $n$ tends to $1$ as $n$ tends to infinity.

For a factorial language $\Lang$, we denote by~$\orR(n)$ the $n$\textsuperscript{th} \emph{Rauzy digraph}  of~$\Lang$.
That is, the vertices of $\orR(n)$ are the words of length~$n$, while the arcs are words of length~$n+1$. More precisely, for $w$ of length $n-1$, the word $awb$ is the arc from $aw$ to $wb$. Given two words $u$ and $v$ connected by an arc, the arc can be labelled by the last letter of $v$. We can thus speak about \emph{labelled Rauzy digraphs} or, if we forget about the labelling of the arcs, about \emph{unlabelled Rauzy digraphs}.
It is also possible to forget about orientation, replacing every arc by an undirected edge, and obtain labelled and unlabelled \emph{Rauzy graphs} $\Rauz(n)$.

Our interest in the Benjamini-Schramm limits of families of Rauzy graphs initiated in the observation in \cite{GrLN} that the limit of De Brujin graphs $\{\mathcal B(k,N)\}_N$ (i.e., the Rauzy graphs of the full shift over the alphabet $\{0,...,k-1\}$) is the (delta-measure on the) Diestel-Leader graph $DL_{k,k}$ which is a Cayley graph of the lamplighter group $\mathbf Z_k\wr \mathbf Z$. Generalizing this result, the first-named author showed~\cite{Leemann} that for languages coming from subshifts of finite type, the Benjamini-Schramm limit of Rauzy graphs is a measure supported on horocyclic products of trees that can be constructed from the subshift. On the other hand, Drummond studied the subshifts of linear complexity ~\cite{Drummond} and proved that the Benjamini-Schramm limit of their unlabelled Rauzy digraphs is the (delta-measure on the) bi-infinite directed path $\vec \Z$ (see also~\cite{Frid, Se18, GLN} for related statements about Rauzy graphs of subshifts of linear complexity).

The question about the Benjamini-Schramm limit of de Brujin graphs was motivated by spectral computations that appeared in \cite{BDh}. Indeed, if a sequence of finite graphs converges, then the associated empirical spectral measures (aka counting measures on the spectra of the corresponding adjacency matrices)  converge to the expected spectral measure on their Benjamini-Schramm limit, see Section  \ref{Section:Rauzy} for details. Spectral convergence however does not in general imply graph convergence.

The main result of this paper is a characterisation of convergence, spectral and in the Benjamini-Schramm sense, of Rauzy graphs of languages with subexponential complexity.

\begin{theorem}\label{Thm:Main}
Let $\Lang$ be a factorial and almost prolongable language with unbounded complexity function $p(n)$. The following are equivalent:
\begin{enumerate}
\item $p(n)$ grows subexponentially and the limit of the Rauzy graphs $\Rauz(n)$ exists.
\item The Rauzy graphs $\Rauz(n)$ converge to the line $\Z$.
\item $p(n)$ grows subexponentially and the empirical spectral measures $\mu_{\Rauz(n)}$ converge.
\item The empirical spectral measures $\mu_{\Rauz(n)}$ converge to the measure that has density $\frac{1}{\pi\sqrt{4-x^2}}dx$ with respect to the Lebesgue measure on~$[-2, 2]$.
\item $p(n)$ satisfies $\lim_n\frac{p(n+1)}{p(n)}=1$.
\item $p(n)$ grows subexponentially and the limit of the Rauzy digraphs $\orR(n)$ exists.
\item The Rauzy digraphs $\orR(n)$ converge to the directed line $\vec \Z$.
\end{enumerate}
\end{theorem}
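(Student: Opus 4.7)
The plan is to prove the equivalence by a cycle of implications organized around condition~(5). Several steps are immediate weakenings: (2)$\Rightarrow$(1), (4)$\Rightarrow$(3), (7)$\Rightarrow$(6); and (7)$\Rightarrow$(2) follows by forgetting edge orientations, since BS convergence of digraphs passes to the underlying undirected graphs and the undirected graph of $\vec\Z$ is $\Z$. What remains is (5)$\Rightarrow$(7), (1)$\Rightarrow$(5), (6)$\Rightarrow$(5), (3)$\Rightarrow$(5) and (5)$\Rightarrow$(4).

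For (5)$\Rightarrow$(7), I would use the two identities
\[
p(n+1) \;=\; \sum_{w} r(w) \;=\; \sum_{w} l(w),
\]
where the sums range over words $w$ of length $n$ and $r(w),l(w)$ count right- and left-extensions of $w$ in $\Lang$. Almost prolongability forces $|\setst{w}{r(w)=0}|$ and $|\setst{w}{l(w)=0}|$ to be $o(p(n))$; subtracting $p(n)=\sum_w 1$ then gives
\[
\sum_{w:\,r(w)\geq 2}(r(w)-1) \;=\; p(n+1)-p(n)+o(p(n)),
\]
so $p(n+1)/p(n)\to 1$ forces the set $S_n$ of right- or left-special or non-extendable vertices to satisfy $|S_n|=o(p(n))$. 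A union bound, using that radius-$r$ balls contain at most $(2|\A|)^r$ vertices, shows that the fraction of $w$ whose radius-$r$ ball in $\orR(n)$ meets $S_n$ tends to $0$. For the remaining $w$ the ball is an oriented path, provided it is not a directed cycle of length $\leq 2r$; but such cycles correspond to periodic words of period at most $2r$, of which there are only $O_r(1)$, a negligible proportion since $p(n)\to\infty$. This yields BS convergence of $\orR(n)$ to $\vec\Z$.

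The three implications into (5) all exploit continuity of simple observables under local-weak convergence. The in- and out-degrees of the root depend only on its $1$-ball, hence are continuous in the BS topology for bounded-degree graphs, so convergence of $\orR(n)$ (resp.\ $\Rauz(n)$) implies convergence of $p(n+1)/p(n)$ (resp.\ $2p(n+1)/p(n)$) to some constant $c$. Almost prolongability gives $\liminf p(n+1)/p(n)\geq 1$, while subexponential growth forces $c=\lim p(n)^{1/n}\leq 1$, so $c=1$ and~(5) holds. For (3)$\Rightarrow$(5) the same conclusion is obtained from the second spectral moment
\[
\int x^2\,d\mu_{\Rauz(n)}(x) \;=\; \tfrac{1}{p(n)}\operatorname{Tr}(A_n^2) \;=\; \tfrac{2p(n+1)}{p(n)},
\]
which converges whenever the empirical spectral measures do.

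Finally, (5)$\Rightarrow$(4) combines the already-established (5)$\Rightarrow$(2) with the moment method: degrees in $\Rauz(n)$ are uniformly bounded by $|\A|$, so the measures $\mu_{\Rauz(n)}$ are supported in a common compact interval; BS convergence to $\Z$ forces convergence of every moment to the corresponding moment of the expected spectral measure on $\Z$, which is the Kesten--McKay density $\frac{1}{\pi\sqrt{4-x^2}}dx$ on $[-2,2]$. The main obstacle, I expect, is the bookkeeping in the counting argument for (5)$\Rightarrow$(7): one must use both extension identities, control right- and left-special factors simultaneously, and handle the short-period exception to ensure that the radius-$r$ balls are genuine oriented paths rather than small loops.
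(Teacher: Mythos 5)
Your proposal is correct, and its core --- the counting argument for (5)$\Rightarrow$(7) via the identities $p(n+1)=\sum_w r(w)=\sum_w l(w)$, the union bound over balls meeting special or non-extendable vertices, and the $O_r(1)$ bound on short directed cycles --- is essentially the paper's Lemma~\ref{Lemma:Technical} combined with the inequalities \eqref{Eq:P}--\eqref{Eq:I}. Where you genuinely diverge is in how ``the limit exists'' is upgraded to ``the limit is $\Z$'': the paper proves a separate subsequence-extraction lemma (Lemma~\ref{Lem:SubExp}), using a pigeonhole on $p(n+2d)/p(n)<1+\frac1d$ to produce a subsequence of Rauzy graphs converging to $\Z$, and then argues that a full limit must agree with it; you instead observe that existence of the Benjamini--Schramm limit forces convergence of the expected root degree (a bounded continuous local observable), hence of $p(n+1)/p(n)$, which almost prolongability bounds below by $1$ and subexponentiality bounds above by $1$. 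This is a cleaner, unified treatment: it handles (1)$\Rightarrow$(5), (6)$\Rightarrow$(5) and (3)$\Rightarrow$(5) by one mechanism (degree for graphs, in-plus-out-degree for digraphs, second moment for spectra), whereas the paper needs the subsequence lemma for the graph case and the second-moment computation \eqref{Eq:SecondMom} for the spectral case. The paper's route has the side benefit that Lemma~\ref{Lem:SubExp} is reused later (Proposition~\ref{Prop:SubMeasure}). One small imprecision: (2)$\Rightarrow$(1), (4)$\Rightarrow$(3) and (7)$\Rightarrow$(6) are not literally ``immediate weakenings,'' since the subexponentiality clause must be derived; but your own degree and second-moment arguments supply it (convergence to $\Z$, to $\vec\Z$, or of $\mu_{\Rauz(n)}$ to the arcsine law each force $p(n+1)/p(n)\to 1$, up to the $O(1)$ correction for loops and double edges that you omit from $\operatorname{Tr}(A_n^2)$ but which is harmless since $p(n)\to\infty$), so this is a presentational gap rather than a mathematical one.
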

Theorem~\ref{Thm:Main} directly applies to languages coming from bi-sided subshifts, as they are both factorial and prolongable, hence, almost prolongable.
While the language of a one-sided subshift is in general not almost prolongable, it is the case for a broad class of subshifts. Surjective subshifts, in particular minimal ones, have prolongable language of subwords. Moreover, if a subshift has a dense orbit or, more generally, if it has a finite set with dense orbit then the language is almost prolongable (see Subsection~\ref{Subsection:OneSided}). 

 Using, in particular, \cite{MR1466182}, we deduce the following sufficient condition for the convergence of Rauzy graphs of subshifts. Here and in what follows, for two non-negative sequences~$f$ and~$g$, we write $f = O(g)$ if $\limsup_n f(n)/g(n) < \infty$, and $f = \Theta(g)$ if both relations~$f = O(g)$ and~$g = O(f)$ are satisfied. We will also write~$f \sim g$ if $\lim_n f(n)/g(n) = 1$.
\begin{corollary}\label{Cor:SubshiftCor}
Let $\Sigma$ be a bi-sided subshift or a one-sided subshift.
For one-sided subshifts, suppose moreover that $\Sigma$ has a finite subset with dense orbit.
Suppose that $\Sigma$ has unbounded complexity and that at least one of the following three conditions is satisfied:
\begin{enumerate}
\item $p(n)=O(n^\alpha)$ for some $\alpha<\frac43$,
\item $p(n)=\Theta(n^\beta)$ for some $\beta<\frac32$,
\item $p(n)\sim n^\gamma$ for some real number $\gamma$.
\end{enumerate}
Then the Rauzy digraphs of $\Sigma$ converge to $\vec \Z$ and its Rauzy graphs converge to $\Z$.
\end{corollary}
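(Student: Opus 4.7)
The strategy is to apply Theorem~\ref{Thm:Main}, reducing the statement to the verification of condition~(5), namely $\lim_n p(n+1)/p(n)=1$, under each of the three growth hypotheses. Convergence of both $\Rauz(n)$ to $\Z$ and of $\orR(n)$ to $\vec\Z$ will then follow from the equivalences (5)$\Leftrightarrow$(2) and (5)$\Leftrightarrow$(7) in the main theorem.

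First I would check the standing hypotheses on the language of subwords $\Lang$ of $\Sigma$. Any subshift gives a factorial language. For a bi-sided subshift, every bi-infinite word extends on both sides, so $\Lang$ is prolongable and hence almost prolongable. In the one-sided case, the assumption that $\Sigma$ has a finite subset with dense orbit is precisely what is needed in Subsection~\ref{Subsection:OneSided} to guarantee almost prolongability. Unboundedness of $p$ is assumed in the statement.

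Condition~(3) is immediate: $p(n)\sim n^\gamma$ gives $p(n+1)/p(n)\sim(1+1/n)^\gamma\to 1$. For conditions~(1) and~(2) I would invoke Cassaigne's analysis in~\cite{MR1466182} of the first difference $s(n)=p(n+1)-p(n)$, which is controlled there by signed sums over bispecial factors weighted by products of their left- and right-extension multiplicities. The relevant consequence is an estimate of the form $s(n)=o(p(n))$ whenever $p$ grows polynomially with sufficiently small exponent; the gap between the thresholds $4/3$ in~(1) and $3/2$ in~(2) reflects the fact that a matching lower bound $p(n)\gtrsim n^\beta$ coming from the $\Theta$-hypothesis can be fed back into the Cauchy--Schwarz-type estimate, whereas under only the upper bound of~(1) one must rely on the cruder Morse--Hedlund inequality $p(n)\geq n+1$ (valid since $p$ is unbounded). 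Once $s(n)/p(n)\to 0$ is established, the identity $p(n+1)/p(n)=1+s(n)/p(n)$ delivers condition~(5) and the theorem concludes.

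The main obstacle I anticipate is extracting from Cassaigne's bispecial machinery a quantitative bound on $s(n)$ sharp enough to recover the precise numerical thresholds $4/3$ and $3/2$; the reduction to condition~(5) is structurally transparent, but the exact exponents genuinely rely on this quantitative input, together with the basic fact that $p$ itself grows at least linearly once it is unbounded.
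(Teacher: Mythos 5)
Your proposal follows the paper's proof essentially verbatim: reduce to condition (5) of Theorem~\ref{Thm:Main}, dispose of case (3) directly since $(1+1/n)^\gamma\to1$, and obtain cases (1) and (2) from Cassaigne's bound $p(n+1)-p(n)=O(n^{3(\beta-1)})$ for $p(n)=O(n^\beta)$ with $\beta\le\frac32$, combined with the lower bounds $p(n)\ge n+1$ and $p(n)\gtrsim n^\beta$ respectively --- exactly the mechanism producing the thresholds $\frac43$ and $\frac32$ that you describe. The only point you gloss over is that Cassaigne's Proposition~2 is stated for prolongable languages, so for one-sided subshifts (where $e_l(n)\le\abs{F}$ may be nonzero) the paper first extends it by passing to an auxiliary language over an enlarged alphabet (Lemma~\ref{Lemma:Sub}); this is a routine adaptation, but it is needed before the quantitative input you cite actually applies.
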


In Section~\ref{Sec:Applications}, we will see that Theorem~\ref{Thm:Main} and Corollary~\ref{Cor:SubshiftCor} apply to many interesting dynamical systems of different origins, including interval exchange transformations and interval exchange transformations with flips, interval translation maps, polygonal and polyhedral billiards, Toeplitz subshifts, multidimensional continuous fractions and more.

Since Rauzy digraphs come with a natural labelling, we are also interested in the convergence of the labelled Rauzy digraphs. If the limit exists, then Theorem~\ref{Thm:Main} implies that it will be a probability measure on rooted labelled digraphs supported on~$\vec\Z$. It can be thought of as a probability measure on the set of bi-infinite words in the alphabet $\mathcal A$ invariant under the shift (as a consequence of unimodularity, see Section \ref{Section:Rauzy} for details).
It is therefore not surprising that unique ergodicity is a natural condition sufficient for the existence of the Benjamini-Schramm limit of labelled Rauzy digraphs. 

In the context of unique ergodicity, it is natural to consider identifying the Benjamini-Schramm limit of labelled Rauzy digraphs for topologically transitive subshifts.
Any such subshift (one-sided or two-sided) can be represented as follows. For an infinite word $\omega$, let us denote by~$\Sigma_\omega$  the closure of the orbit of~$\omega$ under the shift map~$S$. That is, $\Sigma_\omega$ is the smallest one-sided subshift (if $\omega$ is a one-sided infinite word) or the smallest bi-sided subshift (if $\omega$ is a bi-sided infinite word) containing $\omega$. 
It is known that the subshift $(\Sigma_\omega,S)$ is uniquely ergodic if and only if the word $\omega$ has \emph{uniform frequencies} (see Section~\ref{Sec:Labelled}).
The natural extension $(\tilde\Sigma_\omega, S)$ of $(\Sigma_\omega,S)$ is a bi-sided subshift that consists of all infinite words~$\tau$ such that every finite subword of~$\tau$ is also a subword of~$\omega$. If the initial subshift is already bi-sided, then~$(\tilde\Sigma_\omega, S)$ and~$(\Sigma_\omega,S)$ coincide. It~is known that the uniform frequences property, hence, unique ergodicity is preserved under passing to the natural extension. 
In this context we have
\begin{theorem}\label{Thm:MainUnifFreq}
Let~$\omega$ be an infinite word over a finite alphabet such that the subshift $(\Sigma_\omega,S)$ is uniquely ergodic. Assume that its complexity function is unbounded and satisfies $\lim_{n}\frac{p(n+1)}{p(n)} = 1$.
Then the Benjamini--Schramm limit of the labelled Rauzy digraphs exists and can be identified with the unique $S$-invariant measure on $(\tilde \Sigma_\omega, S)$.
\end{theorem}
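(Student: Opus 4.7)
The plan is a compactness-and-uniqueness argument that avoids computing labelled finite-dimensional distributions directly. Since the hypotheses trigger Theorem~\ref{Thm:Main}, the unlabelled Rauzy digraphs $\orR(n)$ converge in the Benjamini--Schramm sense to the directed line $\vec\Z$. Because every labelled $\orR(n)$ has in- and out-degrees bounded by $|\A|$, the sequence of rooted labelled digraphs with the uniform root is precompact in the Benjamini--Schramm topology, so it admits subsequential limits, and it suffices to show that every such limit coincides with $\mu$.

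Let $\lambda$ be any such subsequential limit. The plan is to establish that $\lambda$ is a shift-invariant probability measure supported on $\tilde\Sigma_\omega$, and then conclude $\lambda = \mu$ via unique ergodicity. Since the unlabelled projection of $\lambda$ must agree with the Dirac measure on $\vec\Z$ from the first step, $\lambda$ is concentrated on pointed $\A$-labelled copies of $\vec\Z$; the stabilizer of the root in $\mathrm{Aut}(\vec\Z)$ being trivial, such objects are in bijection with $\A^{\Z}$ via the label-reading map centered at the root, so $\lambda$ is naturally a probability measure on $\A^{\Z}$. To locate its support in $\tilde\Sigma_\omega$, I would observe that every finite substring of a limiting bi-infinite word arises as a limit of arc-label strings along directed subpaths of some $\orR(n)$: a forward path of length $k$ through a vertex $v$ produces the final $k$ letters of a length-$(n+k)$ word in $\Lang$, while a backward path of length $k$ produces a suffix of length $k$ of $v$ itself; by factoriality both substrings belong to $\Lang$, which forces $\mathrm{supp}(\lambda) \subseteq \tilde\Sigma_\omega$ after using shift-invariance to transfer the root-centered condition to arbitrary positions. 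Shift-invariance itself is a consequence of unimodularity: the uniform root on each finite $\orR(n)$ gives a unimodular random rooted graph, weak limits of unimodular random graphs remain unimodular, and for a measure on rooted labelled $\vec\Z$ the mass-transport identity applied to the two root-neighbours specializes precisely to $\int \phi\, d\lambda = \int \phi\circ S\, d\lambda$ for continuous $\phi$, since root-change in $\vec\Z$ is the $\Z$-shift.

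Putting this together, $\lambda$ is a shift-invariant probability measure on $\tilde\Sigma_\omega$. Since $(\Sigma_\omega,S)$ is uniquely ergodic and this property passes to the natural extension $(\tilde\Sigma_\omega,S)$, $\mu$ is the only shift-invariant probability on $\tilde\Sigma_\omega$, so $\lambda = \mu$. Precompactness then upgrades ``every subsequential limit equals $\mu$'' to full Benjamini--Schramm convergence of the labelled Rauzy digraphs to $\mu$. The conceptually most delicate step is the translation of the unimodularity of $\lambda$ on rooted labelled $\vec\Z$ into shift-invariance of the induced measure on $\A^{\Z}$: this hinges on identifying root-change in $\vec\Z$ with the $\Z$-shift and verifying that the label-reading bijection intertwines the two actions. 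The remaining ingredients -- precompactness, support in $\tilde\Sigma_\omega$ via factoriality, and the appeal to unique ergodicity on the natural extension -- follow routinely from Theorem~\ref{Thm:Main} and the hypothesis.
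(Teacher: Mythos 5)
Your proof is correct, but it follows a genuinely different route from the paper's. The paper proves this theorem by a direct computation of the limiting cylinder frequencies: it invokes the uniform frequencies characterization of unique ergodicity (Oxtoby), decomposes $\orR(n)$ minus a set of $o(p(n))$ vertices into long oriented segments, reads each segment as a long subword of $\omega$, and counts occurrences of a fixed word $u$ to show that the proportion of vertices whose labelled $\tfrac{\abs{u}}{2}$-neighbourhood reads $u$ tends to $F_u(\omega)$; this exhibits the limit explicitly, cylinder by cylinder. You instead run a soft compactness-and-uniqueness argument: precompactness of $\mathbf P(\G)$ gives subsequential limits, each is supported on labellings of $\vec\Z$ by Theorem~\ref{Thm:Main}, is shift-invariant by unimodularity (this is exactly the paper's Lemma~\ref{inv}), and is supported on $\tilde\Sigma_\omega$ by factoriality, so unique ergodicity of the natural extension forces all subsequential limits to coincide with $\mu$. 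This is essentially the paper's Proposition~\ref{Prop:SubMeasure} upgraded by unique ergodicity, so all your ingredients do appear in the paper, just not assembled into a proof of this particular theorem; your route avoids the frequency computation entirely and generalizes immediately to the non-uniquely ergodic setting, while the paper's route is quantitative and identifies the cylinder measures $F_u(\omega)$ directly. One small imprecision worth fixing: the label word read along a directed path of length $k$ in $\orR(n)$ need not extend to a length-$(n+k)$ word of $\Lang$ (only each window of length $n+1$ is guaranteed to lie in $\Lang$); what you actually need, and what does hold, is that the length-$2r$ window around the root is a subword of a single arc-word of length $n+1$ once $n\geq 2r-1$, hence lies in $\Lang$ by factoriality.
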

 In the case of subshifts of linear complexity, this was proved in \cite{Drummond}.
\begin{remark}
    The uniform frequencies property can be defined not only for transitive subshifts but for arbitrary languages over a finite alphabet as well. Following the same arguments, it is possible to obtain a similar result for almost prolongable languages with uniform frequencies.
\end{remark}

In the case of a non-uniquely ergodic subshift, one can consider subsequential limits; we show that we can always obtain an invariant measure for a subshift as such a limit with respect to some subsequence of labelled Rauzy digraphs (see Proposition~\ref{Prop:SubMeasure} in Section~\ref{Sec:Labelled}). In Section~\ref{counterexample} we present a  concrete example of a non-uniquely ergodic system where the limit of labelled Rauzy digraphs does not exist and where subsequential limits realize a continuum of distinct invariant measures.
\paragraph{Plan of the article}
Section~\ref{Section:Rauzy} contains all the necessary definitions and the proof of Theorem~\ref{Thm:Main}. 
It also contains the proof of Corollary~\ref{Cor:SubshiftCor}, which is split between Corollaries~\ref{Cor:Asympt} and~\ref{Cor:SmallCompl} for bi-sided subshifts.
The statements about one-sided subshifts to which we can apply Theorem~\ref{Thm:Main} and Corollary~\ref{Cor:SubshiftCor} are proved with the help of an additional technical result: Proposition~\ref{Cor:DO}.
Section~\ref{Sec:Labelled} deals with the limits of labelled Rauzy graphs.
Section~\ref{Sec:Applications} is devoted to applications of the above to various concrete examples of dynamical systems.
Finally, Section~\ref{counterexample} contains an example of a non-uniquely ergodic subshift where we identify subsequential limits of labelled Rauzy graphs and show that they give us a continuum of invariant measures. 

\section*{Acknowledgements} This work was supported by the Research Preparation Grant -- Leading House Geneva (2022). 
A. S. heartily thanks University of Geneva for its hospitality and A. Frid for several interesting discussions. 
%
%
%
%
%
\section{Limits of Rauzy graphs of languages with subexponential complexity}\label{Section:Rauzy}
In the following, $k$ will be an integer greater than $1$ and $\A$ will be a set of cardinality $k$, which we will call an \emph{alphabet}.
Elements of $\A$ are called \emph{letters}, while elements of the free monoid $\A^*$ (where the operation is the concatenation) are called \emph{words}.
A \emph{language} $\Lang$ is simply a subset of $\A^{*}$. We will denote by  $\Lang_n = \A^n\cap\Lang$ the set of words of length $n$ in $\Lang$ and by $p(n)=p_\Lang(n)$ the \emph{complexity} function of $\Lang$, that is, for every $n$, $p(n)=\abs{\Lang_n}$ is the number of words of $\Lang$ of length $n$. 

A language $\Lang$ is \emph{factorial} if for any word $u$ in $\Lang$, any subword $v$ of $u$ is still in~$\Lang$.
A word $v$ of $\Lang$ is \emph{left-prolongable}, respectively \emph{right-prolongable}, if there exists $a\in\A$, respectively $b\in\A$, such that $av$, respectively $vb$ is still in~$\Lang$.
\begin{definition}\label{Def:Prolongable}
A language is \emph{prolongable} if any word $v\in\Lang$ is both left-prolongable and right-prolongable.
We will say that $\Lang$ is \emph{almost prolongable} if both the proportion of left prolongable words of length $n$ and the proportion of right prolongable words of length $n$ tend to $1$ when $n$ goes to infinity.
\end{definition}
\begin{remark}
In the following, for a factorial language $\Lang$, we will always assume that $p(1)=k$, as otherwise this implies that a letter of $\A$ is not used in $\Lang$ and that $\Lang$ is hence a language on a smaller alphabet $\A'\subset\A$.
\end{remark}
As mentioned in the introduction, we denote by $\orR(n)=\orR_\Lang(n)$ the $n$\textsuperscript{th} \emph{Rauzy digraph} of~$\Lang$.
That is, the vertex set of $\orR(n)$ is $\Lang_n$ the set of words of length $n$, and there is an arc between~$v$ and~$w$ if there exist letters $a,b\in\A$ and $u\in\Lang_{n-1}$ such that $v=au$, $w=ub$, and $aub$ is in~$\Lang_{n+1}$.
It naturally gives rise to the \emph{Rauzy graph} $\Rauz(n)$ where every arc $(au, ub)$ is replaced by an edge~$\{au,ub\}$. We will also consider the \emph{labelled Rauzy (di)graph} obtained by putting the label~$b$ on the arc from $au$ to $ub$.

Note that the number of vertices of the (labelled) Rauzy (di)graph is exactly~$p(n)$.
If $\Lang$ is factorial, then the arc set of the Rauzy digraph $\orR_\Lang(n)$ can be identified with $\Lang(n+1)$ and, moreover, $\orR_\Lang(n+1)$ is a spanning subgraph of the line graph of $\orR_\Lang(n)$. 

A \emph{rooted graph} is a connected graph with a specified vertex, which is called the root.

Let us denote by $\G$ the space of (isomorphism classes of) rooted graphs of uniformly bounded degree, where two rooted graphs are close if they coincide on a big ball around their roots.
One defines similarly the spaces of rooted digraphs, of rooted labelled graphs and of rooted labelled digraphs.
We will write $\G$ as a placeholder for any of the four above spaces.
The topology of $\G$ is induced by the metric: $d((G_1,v_1),(G_2,v_2))=2^{-r}$ where $r$ is the biggest radius such that $G_1$ and $G_2$ have isomorphic balls of radius $r$ around their respective root. The metric is complete and separable. Moreover, the space $\G$ of rooted (di)graphs (labelled or not) of uniformly bounded degree is compact.
Since here we study limits of Rauzy graphs of  languages over an alphabet with $k$ letters, all our graphs are of degree bounded by $k$.
Since $\G$ is separable and complete metrizable, the space $\mathbf P(\G)$ of Borel probability measures on $\G$ is compact in the weak topology.

For a sequence $(G_i)_i$ of finite graphs in $\G$ let us choose the root in each of them uniformly at random. This means that we define, for each $i$, the measure  
\begin{equation}
	\mu_i \coloneqq \frac{1}{\abs{G_i}}\sum_{v\in G_i}\delta_{[(G_i,v)]}
\end{equation} 
in the space $\mathbf P(\G)$. If the limit of the sequence $\mu_i$ in $\mathbf P(\G)$ exists then it is called the  \emph{Benjamini-Schramm limit of the sequence} $(G_i)_i$ (see~\cite{BeSch}). 

An important property of the Benjamini-Schramm limit is its \emph{unimodularity} that we will now briefly define (see~\cite{AlLy} for details).

\begin{definition}\label{unimod}
A \emph{flagged rooted graph} is a rooted graph $(G, v)$ with a chosen edge from~$E(G)$ incident to the root and oriented such that~$v$ is its tail. Consider the space of flagged rooted graphs of uniformly bounded degree $\mathcal{FG}$. There exists a natural  map $\pi : \mathcal{FG} \to \mathcal{G}$ that simply forgets the flag. 
Given a random rooted graph $\mu \in \field{P}(\mathcal{G})$, we consider a measure~$\mu^f$ on~$\mathcal{FG}$ given by assigning to each flagged rooted graph the mass of the underlying rooted graph. 
Consider the operator~$R$ on the space $\mathcal{FG}$ acting by replacing the root to the head of the flag, and reversing the orientation of the flag. We say that random rooted graph~$\mu$ is \emph{unimodular} if the measure $\mu^f$ is invariant under~$R$.
The same definition applies to digraphs and to labelled graphs and labelled digraphs. There one has to choose a flag among all the edges incident to the root (independently of their orientation and labels). 
\end{definition}

 A specific case of Benjamini-Schramm convergence is when the limit is a measure $\mu$ supported on one rooted (di)graph $(G,v)$ with $G$ unimodular and hence vertex-transitive. As Theorem \ref{Thm:Main} shows, this is the case for Rauzy graphs of languages of subexponential complexity, as their limit, if exists, is (the delta-measure on) the infinite line $\mathbf{Z}$.   In this particular case the limit of labelled Rauzy digraphs becomes a measure on the set $\mathcal A^{\mathbf{Z}}$ of arc-labellings of the oriented line with letters from $\mathcal{A}$, and we have the following Lemma.

 \begin{lemma}\label{inv}  Let~$\mu$ be a unimodular measure on the space of labelled digraphs supported on $A^\Z$ that we realize as the set of arc-labellings of the oriented infinite line $\vec{\mathbf{Z}}$. Then~$\mu$ is shift invariant. 
\end{lemma}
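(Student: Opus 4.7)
The plan is to unfold what unimodularity says in this specific setting and observe that it reduces directly to shift invariance. Since $\vec{\Z}$ is vertex-transitive, after identifying the root with $0 \in \Z$ a rooted labelled digraph supported on $\vec{\Z}$ is nothing but a bi-infinite word $\omega \in \mathcal{A}^{\Z}$, where $\omega_i$ denotes the label of the arc from $i$ to $i+1$; this is precisely the identification with $\mathcal{A}^{\Z}$ mentioned in the statement. At the root there are exactly two incident edges in $\vec{\Z}$, so the space of flagged rooted labelled digraphs is naturally in bijection with $\mathcal{A}^{\Z} \times \{+, -\}$, the second factor recording whether the flag is oriented from the root toward $+1$ or toward $-1$. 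From the definition of $\mu^f$, for every Borel set $B \subseteq \mathcal{A}^{\Z}$ both $B \times \{+\}$ and $B \times \{-\}$ carry $\mu^f$-mass $\mu(B)$.

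I would then compute the action of the root-change operator $R$ from Definition~\ref{unimod}. If the flag at $0$ points toward $+1$, then $R$ places the new root at $1$ and reverses the flag so that it now points from $1$ back to $0$; recentering the new root at $0$ amounts to shifting all labels by one, giving $R(\omega, +) = (S\omega, -)$. The symmetric computation gives $R(\omega, -) = (S^{-1}\omega, +)$. Applying the $R$-invariance of $\mu^f$ to the set $A = B \times \{+\}$ therefore yields
\[
\mu(B) \; = \; \mu^f(A) \; = \; \mu^f(R^{-1}(A)) \; = \; \mu^f(S(B) \times \{-\}) \; = \; \mu(S(B)),
\]
where the third equality uses that the only flags mapped to direction $+$ under $R$ are those with direction $-$, and that such a flag $(\omega',-)$ lands in $A$ precisely when $S^{-1}\omega' \in B$, i.e.\ when $\omega' \in S(B)$. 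Since this holds for every Borel $B$, the measure $\mu$ is shift invariant.

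The only real subtlety is bookkeeping: the definition orients each flag with the root as tail, irrespective of the native orientation of the underlying arc in $\vec{\Z}$, so one must be careful to distinguish the two incident edges at a vertex and to track how each is transformed under $R$. Once the parametrization and the action of $R$ are correctly set up, unimodularity unpacks immediately into $S$-invariance and no further argument is needed.
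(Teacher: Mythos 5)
Your proof is correct and follows essentially the same route as the paper: identify the flagged space with $\mathcal{A}^{\Z}\times\{+,-\}$, note that $\mu^f$ restricted to each copy is $\mu$, compute that $R$ acts as $(\omega,\sigma)\mapsto(S^{\sigma}\omega,-\sigma)$, and read off shift invariance from $R$-invariance. The only cosmetic difference is that you apply $R$-invariance via preimages where the paper uses forward images; the computation is the same.
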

\begin{proof}
    The measure $\mu^f$ is supported on two copies of~$A^\Z$, that is, on the set $A^\Z \times \{+1, -1\}$, where~$+1$~corresponds to the flag pointing to the right and~$-1$ corresponds to the flag pointing to the left. Moreover, the measure $\mu^f$ is exactly the product~$\mu \otimes \{\delta_{+1}, \delta_{-1}\}$. Let~$T$ be the shift on $A^\Z$. In terms of~$T$, the operator~$R$ can be expressed as follows
    $$
    R(x, \sigma) = (T^\sigma x, -\sigma), \quad x \in A^\Z, \ \sigma \in \{{+1}, {-1}\}.
    $$
    Let $B$ be a measurable subset of $A^\Z$. Then due to $R$-invariance of $\mu^f$ we have 
    $$
    \mu(T(B)) = \mu^f (T(B)\times \{-1\}) = \mu^f (B \times \{+1\}) = \mu(B).
    $$
    Therefore, the measure~$\mu$ is invariant under~$T$.
\end{proof}
 
Given a finite graph~$G$, let~$A_G$ be its adjacency matrix and $\sigma(A_G) \subset \R$ be the set of eigenvalues of~$A_G$. Let~$m_\lambda$ be the multiplicity of an eigenvalue $\lambda \in \sigma(A_G)$. \emph{The empirical spectral measure}~$\mu_G$ of~$G$ is the following measure 
\[
\mu_G = \frac{1}{\abs{V(G)}} \sum_{\lambda \in \sigma(A_G)} m_\lambda \delta_\lambda,
\]
where $\delta_\lambda$ is the Dirac measure at point~$\lambda$.   It can be shown that for a sequence of finite graphs  $(G_i)_i$ converging in the Benjamini-Schramm sense to a random rooted graph~$\nu \in \mathbf P(\G)$, the empirical spectral measures $\mu_{G_i}$ converge to the \emph{expected spectral measure} of $\nu$ which is the average of spectral measures $\mu_{G,v}$ of a graph $G$  computed at the vertex $v$, where the rooted graph $(G,v)$ belongs to the support of $\nu$; see, e.g., \cite{MoWo} for spectral measures on infinite graphs and \cite{ATV} for expected spectral measures on random rooted graphs. (Note that expected spectral measure can be defined for any random rooted graphs and for a finite graph with the uniform root, the expected spectral measure and the empirical spectral measure coincide.) In general, convergence of spectral measures $\mu_{G_i}$, for a sequence of finite graphs $(G_i)_i$, does not imply convergence of  $(G_i)_i$. 

As follows from \ref{Thm:Main}, the only random rooted graph that we have to deal with in this paper is the delta-measure on a transitive graph, namely, the infinite line $\mathbf{Z}$, in which case there is no need to take the average and the limit is just the spectral measure of the adjacency operator on $\mathbf{Z}$ which is independent on the vertex $v\in \mathbf Z$ and which can be easily found, see, e.g., \cite{Ha}[Prop.3.2].
%
%
%
%
%
\subsection{Rauzy (di)graphs converging to the (directed) line}
The aim of this subsection is to characterize languages for which the sequence of Rauzy (di)graphs converges to the (directed) line. We will prove Theorem~\ref{Thm:Main}. 

For a language $\Lang$,  let the functions $e_l(n)$ and $e_r(n)$ count the number of words of length $n$ that are not left, respectively not right, prolongable while $s_l(n)$ and $s_r(n)$ will count words that are left, respectively right, prolongable but not in a unique way.
Finally, let $r_l(n)$ and $r_r(n)$ count the number of words that are left, respectively right, prolongable in a unique way, while $r(n)=\abs{\setst{v\in\A^n\cap\Lang}{\abs{\A v\A\cap\Lang}=1}}$ counts the number of words that are both left and right-prolongable in a unique way.
With these notations, $\Lang$ is prolongable if and only if $e_r(n)\equiv e_l(n)\equiv 0$ and almost prolongable if and only if $\lim_n\frac{e_l(n)}{p(n)}=\lim_n\frac{e_r(n)}{p(n)}=0$.

First, we show an upper bound on the number of connected components of $\orR(n)$ that are directed cycles.
\begin{lemma}\label{Lemma:Cycles2}
Let $\Lang$ be a language on a finite alphabet of cardinality $k$.
Denote by $c_r(n)$ the number of (weakly) connected components of $\orR(n)$ that are directed cycles of length $r$.
Then for $n\geq r$ we have
\[
	c_r(n)\leq k^r.
\]
\end{lemma}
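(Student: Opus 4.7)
The plan is to show that every directed cycle component of length $r$ in $\orR(n)$ arises from an $r$-periodic bi-infinite sequence over $\A$ and is therefore encoded by a word of length $r$, immediately giving the bound $k^r$.

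Concretely, I would fix a cycle $v_0 \to v_1 \to \cdots \to v_{r-1} \to v_0$ that forms a (weakly) connected component of $\orR(n)$. By the definition of a Rauzy arc, each step $v_i \to v_{i+1}$ drops the first letter of $v_i$ and appends a single new letter at the end. Writing $v_0 = z_0 z_1 \cdots z_{n-1}$ and letting $z_n, z_{n+1}, \ldots, z_{n+r-1}$ be the letters successively appended along the cycle, one obtains $v_i = z_i z_{i+1} \cdots z_{i+n-1}$ for every $i$. The closing relation $v_r = v_0$ forces $z_{j+r} = z_j$ for all $0 \le j \le n-1$, so the sequence $(z_j)$ is $r$-periodic.

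Since $n \ge r$, the first $r$ letters of any vertex $v_i$ already constitute a full period, and conversely this $r$-tuple (together with $n$) reconstructs $v_i$ and the entire cycle by reading off consecutive length-$n$ windows of the periodic extension. Assigning to each cycle a canonical base vertex---for instance its lexicographically smallest element---therefore yields an injection from the set of cycle components of length $r$ into $\A^r$, which has cardinality $k^r$. I do not foresee any genuine obstacle; the only mild subtlety is the $r$-fold cyclic ambiguity coming from the choice of starting vertex on a cycle, which the canonical choice resolves. The same argument in fact gives the sharper bound $c_r(n) \le k^r/r$, but $k^r$ is what is needed in the sequel.
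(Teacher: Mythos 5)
Your proposal is correct and follows essentially the same argument as the paper: trace the cycle, observe that closing it forces $x_i = x_{i+r}$, and conclude that (since $n \ge r$) the first $r$ letters of any vertex determine the entire component. The extra remarks about choosing a canonical base vertex and the sharper bound $k^r/r$ are valid but not needed for the statement.
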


\begin{proof}
Let $n$ be an integer greater than $r$ and let $C$ be a connected component of $\orR(n)$ which is a directed cycle.
Choose an arbitrary vertex $v=x_1\cdots x_n$ in~$C$.
Then the vertex following $v$ in the cycle is of the form $x_2\cdots x_nx_{n+1}$ and when closing the cycle we obtain $x_1\cdots x_n=x_{1+r}\cdots x_nx_{n+1}\cdots x_{n+r}$ since $n\geq r$.
In particular, for every $1\leq i\leq n$ we have the relation $x_i=x_{i+r}$ and the cycle $C$ is uniquely determined by the sequence $x_1\cdots x_r$, which gives us the desired result.
\end{proof}

Let us recall that the function $r(n)$ counts the number of words of length $n$ that are both left and right-prolongable in a unique way.
\begin{lemma}\label{Lemma:Technical}
Let $\Lang$ be any language over a finite alphabet.
Then the following are equivalent:
\begin{enumerate}
\item The Rauzy digraphs $\orR(n)$ converge to the directed line $\vec \Z$,\label{Cond:1}
\item The Rauzy graphs $\Rauz(n)$ converge to the line~$\Z$  and $\Lang$ is almost prolongable,\label{Cond:2}
\item $p(n)$ is unbounded and $\lim_n \frac{r(n)}{p(n)}=1$.\label{Cond:3}
\end{enumerate}
\end{lemma}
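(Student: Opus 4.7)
The plan is to prove the cyclic chain $(\ref{Cond:3})\Rightarrow(\ref{Cond:1})\Rightarrow(\ref{Cond:2})\Rightarrow(\ref{Cond:3})$, keyed on the link between the set $U(n):=\setst{v\in\Lang_n}{\abs{\A v\A\cap\Lang}=1}$ (of cardinality $r(n)$) and the local $\vec\Z$-structure of $\orR(n)$: a vertex in $U(n)$ has, up to the small defect of imperfect prolongability, a unique predecessor and successor in $\orR(n)$.

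For $(\ref{Cond:1})\Rightarrow(\ref{Cond:2})$, forgetting arc orientations is continuous in the local topology, so $\orR(n)\to\vec\Z$ directly yields $\Rauz(n)\to\Z$. The $1$-ball in $\vec\Z$ around any vertex contains one incoming and one outgoing arc, hence Benjamini--Schramm convergence forces almost every vertex of $\orR(n)$ to be simultaneously left- and right-prolongable, giving almost prolongability; $p(n)$ is unbounded because $\vec\Z$ is infinite.

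For $(\ref{Cond:2})\Rightarrow(\ref{Cond:3})$, I would interpret $\Rauz(n)$ as the multigraph in which each arc of $\orR(n)$ contributes one undirected edge, so that $d_{\mathrm{und}}(v)=d^+(v)+d^-(v)$ away from self-loops. Convergence $\Rauz(n)\to\Z$ forces for almost every $v$ the undirected $1$-ball to be a simple path of length $2$, ruling out self-loops and multi-edges and giving $d^+(v)+d^-(v)=2$. Combined with almost-prolongability ($d^\pm(v)\geq 1$ a.e.), this yields $d^+(v)=d^-(v)=1$ a.e.; almost prolongability then ensures that the unique left and right extensions $a,b$ combine to a word $avb\in\Lang$, so $\abs{\A v\A\cap\Lang}=1$, i.e.\ $v\in U(n)$, for almost every $v$.

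For $(\ref{Cond:3})\Rightarrow(\ref{Cond:1})$, fix $R\geq 1$ and call $v$ \emph{$R$-bad} if its $R$-ball in $\orR(n)$ is not isomorphic to a directed path of length $2R$. Such $v$ either (a) lies within distance $R$ of some $w\notin U(n)$, or (b) lies on a directed cycle of length at most $2R$ all of whose vertices are in $U(n)$. In case (a), each such $w$ is within distance $R$ of at most a constant $C_{R,k}$ vertices (using the degree bound $2k$ on $\orR(n)$), so this contribution is $\leq C_{R,k}(p(n)-r(n))=o(p(n))$ by hypothesis. In case (b), such a cycle is necessarily a whole weakly connected component of $\orR(n)$ (each vertex having in- and out-degree $1$), so Lemma~\ref{Lemma:Cycles2} bounds the count by $\sum_{r=1}^{2R} r\,k^r = O_R(1) = o(p(n))$, using the unboundedness of $p(n)$. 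Hence the proportion of $R$-bad vertices tends to zero for each fixed $R$, proving $\orR(n)\to\vec\Z$. The main obstacle is this last step: the key insight is that the forward and backward unique-extension orbits can fail to produce a genuine directed path of length $2R$ only through a short cycle, which is controlled by Lemma~\ref{Lemma:Cycles2}; a secondary subtlety in $(\ref{Cond:2})\Rightarrow(\ref{Cond:3})$ is bridging the undirected-degree control to the length-$(n+2)$ condition defining $U(n)$ via the prolongability hypothesis.
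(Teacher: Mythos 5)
Your proposal is correct and is essentially the paper's proof rearranged into a single cycle: your (\ref{Cond:3})$\Rightarrow$(\ref{Cond:1}) is the same count of vertices within distance $R$ of a non-biregular vertex plus the short-cycle bound from Lemma~\ref{Lemma:Cycles2}, and your (\ref{Cond:2})$\Rightarrow$(\ref{Cond:3}) is the same reduction to degree-$2$ vertices split using almost prolongability (the paper instead proves (\ref{Cond:1})$\Leftrightarrow$(\ref{Cond:3}) and (\ref{Cond:1})$\Rightarrow$(\ref{Cond:2}) separately, but with identical ingredients). The one point to watch is the final bridge in your (\ref{Cond:2})$\Rightarrow$(\ref{Cond:3}), where you pass from $d^+(v)=d^-(v)=1$ to $\abs{\A v\A\cap\Lang}=1$: for a truly arbitrary language the phrase ``almost prolongability ensures $avb\in\Lang$'' is not a complete justification (it concerns extensions in $\Lang_{n+1}$, not membership of $avb$ in $\Lang_{n+2}$), but the paper's own identity $t(n)=r(n)+q_i(n)+q_o(n)$ makes the same tacit identification of $r(n)$ with the number of biregular vertices, so your argument is at the same level of rigour as the original.
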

\begin{proof}
\eqref{Cond:2}$\implies$\eqref{Cond:3}.
Suppose that $\Rauz(n)$ converge to $\Z$. This is equivalent to the fact that for every $r>1$ and every $\epsilon>0$ there exists $N=N(r,\epsilon)$ such that for $n\geq N$, for at least $1-\epsilon$ of the vertices of $\Rauz(n)$ the ball of radius $r$ around $v$ is a path of length $2r$.
On the one hand, for $n\geq N(r,\frac14)$ the graph $\Rauz(n)$ has at least $1$ vertex with the ball as described above, and hence at least $2r$ vertices, which implies the first condition.
On the other hand, for $n\geq N(2,\epsilon)$, we have $\frac{t(n)}{p(n)}\geq 1-\epsilon$, where $t(n)$ is the number of vertices of $\Rauz(n)$ that have degree $2$.
This show that $\lim_n \frac{t(n)}{p(n)}=1$.
Finally, $t(n)=r(n)+q_i(n)+q_o(n)$, where $q_i(n)$ counts the number of vertices of $\orR(n)$ of in-degree $2$ and out-degree $0$, while $q_o(n)$ counts the number of vertices of out-degree $2$ and in-degree $0$.
Since $\Lang$ is almost prolongable, both $\frac{q_i(n)}{p(n)}$ and $\frac{q_o(n)}{p(n)}$ go to $0$ when $n$ goes to infinity, which finishes the proof.

\eqref{Cond:1}$\implies$\eqref{Cond:3}.
If the digraphs $\orR(n)$ converge to $\vec \Z$, then the same proof as above shows that $p(n)$ is unbounded and that $\frac{r(n)}{p(n)}\geq 1-\epsilon$ for $n\geq N(2,\epsilon)$, without supposing that $\Lang$ is almost prolongable.

\eqref{Cond:3}$\implies$\eqref{Cond:1}.
We now show that the conditions on $p(n)$ and $r(n)$ imply the convergence of the Rauzy digraphs.
Let $r$ be an integer.
For $n\geq r$, the number of non-biregular vertices of $\orR(n)$ is $p(n)-r(n)$.
Since every vertex has at most $2k$-neighbours, the number of vertices of $\orR(n)$ which are at distance at most $r$ of a non-biregular vertex is bounded above by $(2k)^{r+1}\bigl(p(n)-r(n)\bigr)$.
In consequence, for at least
\[
	1-\frac{(2k)^{r+1}\bigl(p(n)-r(n)\bigr)}{p(n)}
\]
of the vertices of $\orR(n)$, the ball of radius $r$ around $v$ consists only of biregular vertices.
The only connected rooted digraphs of radius at most $2r$ with only biregular vertices --- excepts maybe for vertices at distance $r$ from the root which might have in or out-degree $0$ --- are the directed paths of length $2r$ and the directed cycles of length less than $2r$.
By Lemma~\ref{Lemma:Cycles2} there are at most
\[
\sum_{i=1}^{2r}i\cdot k^{i}\eqqcolon M
\]
vertices of $\orR(n)$ contained in a connected component of $\orR(n)$ isomorphic to a cycle of length at most $2r$.
Altogether, the proportion of vertices of $\orR(n)$ such that the balls of radius~$r$ around them is a directed path is at least
\[
	1-\frac{(2k)^{r+1}\bigl(p(n)-r(n)\bigr)}{p(n)}-\frac{M}{p(n)}.
\]
By assumptions on $p(n)$ and $r(n)$ this quantity tends to $1$ when $n$ goes to infinity, which proves the Benjamini--Schramm convergence of the $\orR(n)$ to $\vec \Z$.

\eqref{Cond:1}$\implies$\eqref{Cond:2}.
It is clear that the convergence of the Rauzy digraphs implies the convergence of the Rauzy graphs. It hence remains to show that it also implies that $\Lang$ is almost prolongable.
It follows from the definition that for $*\in\{l,r\}$ we have
\begin{equation}\label{Eq:I1}
	r(n)\leq r_*(n)= p(n)-s_*(n)-e_*(n).
\end{equation}
But then, using that the convergence of $\orR(n)$ to $\vec\Z$ implies $\lim_n\frac{r(n)}{p(n)}=1$, the above inequality implies that $\lim_n\frac{s_*(n)}{p(n)}=\lim_n\frac{e_*(n)}{p(n)}=0$ for $*\in\{l,r\}$. In particular, $\Lang$ is almost prolongable.
\end{proof}
Note that Lemma~\ref{Lemma:Technical}  holds in any, not necessarily prolongable, language.
The condition on $\frac{r(n)}{p(n)}$ is however hard to check in general.
For factorial languages, the situation is better and Theorem~\ref{Prop:Equivalence} below gives a criterion for convergence that depends only on the complexity.
For factorial and almost prolongable languages of unbounded subexponential complexity it says moreover that if the graphs converge, then they converge to the line.
We first prove that under these hypotheses one can always find a subsequence of $\Rauz(n)$ converging to $\Z$.
\begin{lemma}\label{Lem:SubExp}
Let $\Lang$ be a factorial and almost prolongable language. Suppose that its complexity is unbounded and grows subexponentially.
Then there exists a sequence of positive integers $n_d$ such that the Rauzy graphs $\Rauz(n_d)$ converge to the line $\Z$.
\end{lemma}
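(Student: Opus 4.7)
The plan is to invoke Lemma~\ref{Lemma:Technical}: it suffices to exhibit a subsequence $n_d \to \infty$ with $r(n_d)/p(n_d) \to 1$, because then the hypothesis of almost prolongability lets us upgrade to the Benjamini--Schramm convergence of the unoriented Rauzy graphs along $n_d$. The two ingredients I would use are a combinatorial upper bound on $p(n) - r(n)$ in terms of the complexity increment $p(n+1) - p(n)$, and an extraction argument that uses subexponential growth to produce infinitely many $n$ where this increment is small.

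For the combinatorial bound, I would exploit factoriality: every word of length $n+1$ has a unique length-$n$ prefix and a unique length-$n$ suffix, both of which lie in $\Lang_n$. Writing $d_r(w)$ (resp.\ $d_l(w)$) for the number of right (resp.\ left) extensions of $w \in \Lang_n$ in $\Lang$, this gives the two identities
\[
p(n+1) \;=\; \sum_{w \in \Lang_n} d_r(w) \;=\; \sum_{w \in \Lang_n} d_l(w).
\]
Splitting the first sum according to whether $d_r(w)$ equals $0$, $1$, or is $\geq 2$ yields $p(n+1) \geq r_r(n) + 2 s_r(n) = p(n) - e_r(n) + s_r(n)$, hence $s_r(n) \leq (p(n+1) - p(n)) + e_r(n)$, and symmetrically $s_l(n) \leq (p(n+1) - p(n)) + e_l(n)$. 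Since any word not counted by $r(n)$ must fail at least one of the four conditions counted by $e_l, e_r, s_l, s_r$,
\[
p(n) - r(n) \;\leq\; 2\bigl(e_l(n) + e_r(n)\bigr) + 2\bigl(p(n+1) - p(n)\bigr).
\]
Dividing by $p(n)$ and applying almost prolongability gives $\frac{r(n)}{p(n)} \geq 1 - o(1) - 2\left(\frac{p(n+1)}{p(n)} - 1\right)$.

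For the extraction step, almost prolongability also implies $p(n+1) \geq p(n) - e_r(n)$, so $\liminf_n p(n+1)/p(n) \geq 1$. If no subsequence of the ratios $p(n+1)/p(n)$ tended to $1$, then in fact $\liminf_n p(n+1)/p(n) > 1$, so there would exist $\epsilon > 0$ and $N$ with $p(n+1)/p(n) \geq 1 + \epsilon$ for every $n \geq N$; iterating yields $p(n) \geq C(1+\epsilon)^n$, contradicting subexponential growth. Hence a subsequence $n_d \to \infty$ with $p(n_d+1)/p(n_d) \to 1$ exists. Along it, the estimate above forces $r(n_d)/p(n_d) \to 1$, and Lemma~\ref{Lemma:Technical} delivers $\Rauz(n_d) \to \Z$.

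The only delicate point is the combinatorial bound on $s_l(n) + s_r(n)$: the identity $p(n+1) = \sum_w d_l(w)$ crucially depends on factoriality, which ensures that the length-$n$ suffix of any $(n+1)$-word actually belongs to $\Lang_n$. Everything else is elementary, and the subsequence extraction is clean because $p(n+1)/p(n) \leq k$ is automatically bounded above, so the liminf under consideration is a finite real number.
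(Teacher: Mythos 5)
Your argument is correct, but it takes a genuinely different route from the paper's. The paper proves Lemma~\ref{Lem:SubExp} directly with a window of width $2d$: for each $d$ it finds $n$ with $\frac{p(n+2d)}{p(n)}<1+\frac1d$ (using subexponentiality) and with at least $(1-\frac1d)p(n)$ words of $\Lang_n$ extendable by $d$ letters on each side (using almost prolongability and factoriality); the middle-subword map $\Lang_{n+2d}\to\Lang_n$ then has a unique preimage over most of $\Lang_n$, which forces the $d$-ball of most vertices to be a segment or a short cycle, and the cycles are discarded by Lemma~\ref{Lemma:Cycles2}. You instead reduce everything to showing $r(n_d)/p(n_d)\to 1$ along a subsequence and then invoke the implication \eqref{Cond:3}$\Rightarrow$\eqref{Cond:2} of Lemma~\ref{Lemma:Technical}; your key inequalities $s_*(n)\le p(n+1)-p(n)+e_*(n)$, obtained from $p(n+1)=\sum_{w\in\Lang_n}d_r(w)=\sum_{w\in\Lang_n}d_l(w)$, are exactly the paper's Equation~\eqref{Eq:P} combined with Equation~\eqref{Eq:I}, which the paper only establishes later inside the proof of Theorem~\ref{Prop:Equivalence}. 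In effect you have observed that the implication \eqref{CondF:5}$\Rightarrow$\eqref{CondF:4} of that theorem can be run along a subsequence, and that subexponentiality always supplies a subsequence with $p(n_d+1)/p(n_d)\to1$; this is more economical than the paper's proof, since it only needs the one-step ratio rather than control of $p(n+2d)/p(n)$ over a growing window. Two points you should make explicit for completeness: Lemma~\ref{Lemma:Technical} is stated for the full sequence, so you need to note that its proof of \eqref{Cond:3}$\Rightarrow$\eqref{Cond:1} is quantitative (a lower bound on the proportion of good vertices in terms of $\frac{p(n)-r(n)}{p(n)}$ and $\frac{1}{p(n)}$) and therefore applies verbatim along any subsequence on which $p(n_d)\to\infty$ and $r(n_d)/p(n_d)\to1$; and the fact that $p(n_d)\to\infty$, which is needed to absorb the finitely many short directed cycles, follows from unboundedness via the Ehrenfeucht--Rozenberg dichotomy $p(n)\ge n+1$ for unbounded factorial languages, invoked elsewhere in the paper.
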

\begin{proof}
Let $d$ be an arbitrary positive integer.
On the one hand, we observe that for any positive subexponential function $p(n)$, there exists infinitely many $n$ such that
\begin{equation}\label{Eq:SubExp}
	\frac{p(n+2d)}{p(n)}<1+\frac1d.
\end{equation}
Indeed, otherwise all $n$ big enough would satisfy $p(n + 2d) > (1 + \frac1d)p(n)$, implying exponential growth.
On the other hand, for all $n$ big enough we have
\begin{equation}\label{Eq:SubExp2}
	\abs{\setst{v\in\Lang_n}{\exists u,w\in\Lang_d:uvw\in\Lang_{n+2d}}}>(1-\frac1d)p(n).
\end{equation}
Indeed, given $d\in\N$ one can choose $\delta>0$ such that $(1-\delta)^{2d}>(1-\frac1d)$.
Since $\Lang$ is almost prolongable, for $n$ big enough at least $(1-\delta)p(n)$ of words in $\Lang_n$ are prolongable. Using that $\Lang$ is also factorial, the left hand side in Equation~\eqref{Eq:SubExp2} is bigger than $(1-\delta)^{2d}p(n)>(1-\frac1d)p(n)$.

Consider a map $\phi\colon\Lang_{n+2d}\to\Lang_n$ that maps a word $w$ of length $n + 2d$ to the middle subword $\phi(w) = w_{d+1}\dots w_{d+n}$ of length $n$.
Since the language is factorial, the map $\phi$ is well-defined.
Assume that $n$ and $d$ are such that both Equations~\eqref{Eq:SubExp} and ~\eqref{Eq:SubExp2} are satisfied.
Then there are at most $\frac1d p(n)$ words in $\Lang_n$ which have more than one pre-image and at most $\frac1d p(n)$ words in $\Lang_n$ which have no pre-image.
Let $\Lang'_n$ denote the subset of words in $\Lang_n$ with a unique pre-image under~$\phi$.
It is then clear that $\abs{\Lang'_n} > (1-\frac2d)p(n)$.
For any word in $\Lang'_n$, its $k$-neighborhood in the Rauzy graph $\Rauz(n)$ is either a segment of $\Z$ of length $2d$ or a closed cycle of length no more than $2d$.

Now, let $(n_d)_{d\in\N}$ be an increasing sequence of integers such that for all $d$, the couple $(d,n_d)$ satisfies Equations~\eqref{Eq:SubExp} and ~\eqref{Eq:SubExp2}.
We claim that the $\Rauz(n_d)$ converge to $\Z$.
To prove that, we take any positive integer $l$ and show that with probability tending to $1$ the $l$-neighborhood of a uniformly random vertex in $\Rauz(n_d)$ is a segment of $\Z$ of length $2l$.
The number of cycles of length no more than $2l$ is finite.
More precisely, it is bounded above by some constant $c_l$ depending only on $l$ and on the cardinality of the alphabet.
Therefore, the number of all the vertices covered by these cycles is $o(p(n_d))$ due to the unboundedness of $p(n)$.
Hence, for any $d\geq l$, with probability at least $1-\frac2d - \frac{o(p(n_d))}{p(n_d)}$ the $l$-neighborhood of a vertex in $\Rauz(n_d)$ is a segment of $\Z$.
\end{proof}
Now we finally prove Theorem~\ref{Thm:Main}.
\begin{theorem}\label{Prop:Equivalence}
Let $\Lang$ be a factorial and almost prolongable language with unbounded complexity $p(n)$.
Then the following are equivalent:
\begin{enumerate}
\item The Rauzy digraphs $\orR(n)$ converge to the directed line $\vec \Z$.\label{CondF:2}
\item The Rauzy graphs $\Rauz(n)$ converge to the line~$\Z$.\label{CondF:4}
\item $p(n)$ grows subexponentially and the limit of the Rauzy digraphs $\orR(n)$ exists.\label{CondF:1}
\item $p(n)$ grows subexponentially and the limit of the Rauzy graphs $\Rauz(n)$ exists.\label{CondF:3}
\item $p(n)$ satisfies $\lim_n\frac{p(n+1)}{p(n)}=1$.\label{CondF:5}
\item The empirical spectral measures $\mu_{\Rauz(n)}$ converge to the measure that has density $\frac{1}{\pi\sqrt{4-x^2}}dx$ with respect to the Lebesgue measure on~$[-2, 2]$.\label{CondF:6}
\item $p(n)$ grows subexponentially and the empirical spectral measures $\mu_{\Rauz(n)}$ converge.\label{CondF:7}
\end{enumerate}
\end{theorem}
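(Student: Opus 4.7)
I would organize all seven conditions around condition~(5), which is a purely combinatorial property of the complexity function and serves as the bridge between graph convergence and spectral convergence. The plan is to prove (1)~$\Leftrightarrow$~(2)~$\Leftrightarrow$~(5) as the central equivalence, then attach (3) and (4) via a subsequence argument, and finally attach (6) and (7) via a second-moment identity.

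For the central equivalence, (1)~$\Leftrightarrow$~(2) is the content of Lemma~\ref{Lemma:Technical} under the almost-prolongability hypothesis. For (1) or (2) implies (5), I would count arcs: by factoriality the arc set of $\orR(n)$ is in bijection with $\Lang_{n+1}$, so the average out-degree equals $p(n+1)/p(n)$ and must converge to the expected out-degree at the root of the limit, which is $1$ for both $\vec\Z$ and $\Z$. For (5)~$\Rightarrow$~(1), I would refine the lower bound used in Lemma~\ref{Lem:SubExp}: summing out-degrees in $\orR(n)$ gives $p(n+1)\geq r_r(n)+2s_r(n)=p(n)+s_r(n)-e_r(n)$, and combined with almost prolongability ($e_r(n)=o(p(n))$) and~(5), this forces $s_r(n)=o(p(n))$, and symmetrically $s_l(n)=o(p(n))$. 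Consequently only $o(p(n))$ vertices of $\orR(n)$ fail to be biregular. Small cycle components are controlled by Lemma~\ref{Lemma:Cycles2}, and the unboundedness of $p(n)$ makes their total contribution vanish, so the ball-counting argument from the proof of Lemma~\ref{Lemma:Technical}~(3)$\Rightarrow$(1) delivers $\orR(n)\to\vec\Z$. Note that (5) trivially forces subexponential growth.

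To attach (3) and (4), the implications (1)$\Rightarrow$(3) and (2)$\Rightarrow$(4) are immediate from the above. Conversely, if (3) holds, then Lemma~\ref{Lem:SubExp} gives a subsequence $(n_d)$ with $\Rauz(n_d)\to\Z$, hence $\orR(n_d)\to\vec\Z$ by the central equivalence applied along that subsequence; since the full sequence $\orR(n)$ converges by~(3), it must converge to $\vec\Z$. The argument for (4)$\Rightarrow$(2) is identical. For the spectral block, (2)$\Rightarrow$(6) uses the standard principle that Benjamini--Schramm convergence forces convergence of empirical spectral measures to the expected spectral measure of the limit, together with the known spectral measure of the adjacency operator on~$\Z$. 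For (6)$\Rightarrow$(5) I match second moments: $\int x^2\,d\mu_{\Rauz(n)}(x)=\operatorname{tr}(A_{\Rauz(n)}^2)/p(n)=2\,p(n+1)/p(n)$, while the target arcsine measure on $[-2,2]$ has second moment~$2$. Hence (6)$\Rightarrow$(7); and (7)$\Rightarrow$(6) follows again by Lemma~\ref{Lem:SubExp}, which yields a subsequence whose spectral measures converge to the arcsine measure, so that the full limit in~(7) must coincide with it.

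The hardest step is (5)~$\Rightarrow$~(1): the hypothesis $p(n+1)/p(n)\to 1$ concerns only averages, and upgrading it to the pointwise assertion that asymptotically every vertex of $\orR(n)$ is biregular requires both halves of the almost-prolongability hypothesis and the out-degree lower bound on $p(n+1)$. Everything else---the spectral second-moment identity and the subsequence lifts from~(3), (4), (7) to the stronger assertions---is bookkeeping around this pointwise biregularity statement and the cycle control provided by Lemma~\ref{Lemma:Cycles2}.
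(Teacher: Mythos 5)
Your proposal matches the paper's proof in all essentials: the same pivot on condition~(5), the same reduction to $\lim_n r(n)/p(n)=1$ via Lemma~\ref{Lemma:Technical} together with the prolongation counts $s_*$, $e_*$ (your out-degree sum $p(n+1)\geq r_r(n)+2s_r(n)$ is exactly the paper's inequality~\eqref{Eq:P} read on the right-prolongation side), the same subsequence lift from Lemma~\ref{Lem:SubExp} for conditions (3), (4), (7), and the same second-moment computation for the spectral block. The only discrepancy is cosmetic: the identity $\int x^2\,d\mu_{\Rauz(n)}=2\,p(n+1)/p(n)$ should carry an $O(1/p(n))$ correction accounting for loops and parallel edges, as in the paper's Equation~\eqref{Eq:SecondMom}, which is harmless because $p(n)$ is unbounded.
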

\begin{proof}
\textbf{\eqref{CondF:2}$\implies$\eqref{CondF:4}} and \textbf{\eqref{CondF:1}$\implies$\eqref{CondF:3}} The limit of the $\Rauz(n)$ is the underlying graph of the limit of the $\orR(n)$.

\textbf{\eqref{CondF:3}$\implies$\eqref{CondF:4}} This is Lemma~\ref{Lem:SubExp}.

\textbf{\eqref{CondF:2}$\iff$\eqref{CondF:4}$\iff$\eqref{CondF:5}}
We will in fact prove something slightly stronger. Namely, that for a general language $\Lang$ we have \eqref{CondF:2}$\iff$[\eqref{CondF:4} and $\Lang$ is almost prolongable]$\iff$[\eqref{CondF:5}, $\Lang$ is almost prolongable and $p(n)$ is unbounded].
By a theorem of Ehrenfeucht and Rozenberg~\cite{MR707633}, for a factorial language $\Lang$ either $p(n)$ is bounded or $p(n)\geq n+1$ for every $n$.
In particular, the condition $p(n)$ is unbounded is equivalent to $\lim_np(n)=\infty$.
Thus, by Lemma~\ref{Lemma:Technical}, it is enough to show that, for a factorial language, we have $\lim_n\frac{r(n)}{p(n)}=1$ if and only if $\Lang$ is almost prolongable and $\lim_n\frac{p(n+1)}{p(n)}=1$.
Observe that the last equality is equivalent to $\lim_n\frac{p(n+1)-p(n)}{p(n)}=0$.

Since $\Lang$ is factorial, every word of length $n+1$ can be obtained as $av$ with $a\in\A$ and $v\in\Lang$ of length $n$.
Hence we obtain
\[
	r_l(n)+2s_l(n)\leq p(n+1)\leq r_l(n)+ks_l(n).
\]
Using that $p(n)=r_l(n)+s_l(n)+e_l(n)$ we conclude that
\begin{equation}\label{Eq:P}
	s_l(n)\leq p(n+1)-p(n)+e_l(n)\leq (k-1)s_l(n).
\end{equation}
Analogous inequalities hold for $s_r(n)$ and $e_r(n)$.
We also have the inequality
\begin{equation}\label{Eq:I}
	p(n)-s_l(n)-s_r(n)-e_l(n)-e_r(n)\leq r(n)
\end{equation}
which holds for any language.

Suppose now that $\lim_n\frac{r(n)}{p(n)}=1$. We already know that this implies that $\Lang$ is almost prolongable. Equation~\eqref{Eq:I1} gives us $\lim_n\frac{s_l(n)}{p(n)}=\lim_n\frac{s_r(n)}{p(n)}=0$, while
by the right-hand side of Equation~\eqref{Eq:P} we obtain that $\lim_n\frac{p(n+1)-p(n)}{p(n)}=0$.

For the converse, suppose that $\Lang$ is almost prolongable and that 
$\lim_n\frac{p(n+1)}{p(n)}=1$.
The left-hand side of Equation~\eqref{Eq:P} gives us that $\lim_n\frac{s_l(n)}{p(n)}=0$ and similarly $\lim_n\frac{s_r(n)}{p(n)}=0$. Using Equation~\eqref{Eq:I} we conclude that $\lim_n\frac{r(n)}{p(n)}=1$.

\textbf{\eqref{CondF:2}$\implies$\eqref{CondF:1}}
Once again we prove something sligthly stronger than announced: for a general language $\Lang$ we have \eqref{CondF:2}$\implies$[\eqref{CondF:1}, $\Lang$ is almost prolongable and $p(n)$ is unbounded].
We have to show that Condition~\eqref{CondF:2} implies $\Lang$ is almost prolongable, and $p(n)$ is unbounded and grows subexponentially. The first part follows from~\eqref{CondF:4} and the second part from~\eqref{CondF:5}.

\textbf{\eqref{CondF:7}$\implies$\eqref{CondF:5}} Convergence of empirical spectral measures~$\mu_{\Rauz(n)}$ implies the convergence of their positive moments. The second moment of the empirical measure of a simple graph $(V, E)$ is double the average degree which is $\frac{2\abs{E}}{\abs{V}}$. In general case when the graph might have loops and double edges, the second moment of the empirical measure is 
\[
\frac{1}{\abs{V}} \sum_{v \in V} \abs{\{(e_1, e_2) \in E \colon e_1 \textnormal{ and } e_2 \textnormal{ have the same endpoints, one of which is } v\}}
\]
\[
\frac{1}{\abs{V}} \big(\sum_{v \in V} \abs{\{\textnormal{loops in } v \}}^2 +\sum_{v, u \in V, \ u\not = v} \abs{\{\textnormal{edges between } u \textnormal{ and } v\}}^2\big)
\]
In the case of Rauzy graphs, we have no more than $\abs{\A}^2$ double edges and no more than $\abs{\A}$ loops. Hence 
\begin{equation}
\int z^2 d \mu_{\Rauz(n)} = 2 \frac{p(n + 1) + O(1)}{p(n)}. \label{Eq:SecondMom}
\end{equation}
Hence, the limit $\lim_{n} \frac{p(n + 1)}{p(n)}$ exists. Due to subexponential complexity of the subshift, the only possible value of this limit is~$1$.

\textbf{\eqref{CondF:6}$\implies$\eqref{CondF:7}}
In case we know that the empirical measures $\mu_{\Rauz(n)}$ converge to the measure that has density $\frac{1}{\pi\sqrt{4-x^2}}dx$, then it follows from the Equation~\ref{Eq:SecondMom} that the complexity is subexponential.

\textbf{\eqref{CondF:4}$\implies$\eqref{CondF:6}} is done in \cite{ATV} in the general case and can easily be verified in our particular case when the limit is (a delta-measure on) a unique transitive graph,~$\mathbf{Z}$.
\end{proof}
\begin{remark}
As seen in the proof of Theorem~\ref{Prop:Equivalence}, for a general language $\Lang$, the convergence of the Rauzy digraphs $\orR(n)$ to the directed line $\vec \Z$ implies both that $\Lang$ is almost prolongable and $p(n)$ is unbounded.
Similarly, Conditions~\ref{CondF:4} and \ref{CondF:6} of Theorem~\ref{Prop:Equivalence} each implies that $p(n)$ is unbounded.
\end{remark}
Heuristically, Theorem~\ref{Prop:Equivalence} (namely, the equivalence between Conditions~\eqref{CondF:2} and~\eqref{CondF:5}) implies that for a factorial and almost prolongable language of unbounded complexity, the Rauzy digraphs converge to $\vec\Z$ (equivalently, the Rauzy graphs converge to $\Z$) if and only if the complexity is both subexponential and ``sufficiently regular''. 

As a first example of this principle, we have the following corollary of Theorem~\ref{Prop:Equivalence}, where two functions are \emph{asymptotically equivalent}, denoted $f\sim g$, if $\lim_x\frac{f(x)}{g(x)}=1$.
\begin{corollary}\label{Cor:Asympt}
Let $\Lang$ be a factorial and almost prolongable language.
Suppose that there exists a real $\alpha\geq 1$ with $p(n)\sim n^\alpha$.
Then its Rauzy digraphs converge to~$\vec \Z$.
\end{corollary}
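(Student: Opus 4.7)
The plan is to verify condition \eqref{CondF:5} of Theorem~\ref{Prop:Equivalence}, namely $\lim_n \frac{p(n+1)}{p(n)} = 1$, and then invoke the theorem to conclude convergence of the Rauzy digraphs to $\vec\Z$.

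First, I would note that the hypothesis $\alpha \geq 1$ combined with $p(n) \sim n^\alpha$ ensures that $p(n)$ is unbounded, so Theorem~\ref{Prop:Equivalence} applies. Next, writing $p(n) = n^\alpha(1 + \varepsilon(n))$ with $\varepsilon(n) \to 0$, the ratio becomes
\[
\frac{p(n+1)}{p(n)} = \left(\frac{n+1}{n}\right)^\alpha \cdot \frac{1+\varepsilon(n+1)}{1+\varepsilon(n)},
\]
and both factors tend to $1$, giving the desired limit. Since $\Lang$ is assumed factorial and almost prolongable with unbounded complexity and the ratio condition holds, Theorem~\ref{Prop:Equivalence} yields the convergence of $\orR(n)$ to $\vec\Z$. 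There is no real obstacle here; the corollary is essentially an immediate application, with the only subtlety being to check that the asymptotic equivalence $p(n) \sim n^\alpha$ transfers to the ratio condition, which is straightforward.
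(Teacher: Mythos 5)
Your proof is correct and matches the paper's intended argument: the paper states this as an immediate corollary of Theorem~\ref{Prop:Equivalence} (without writing out the details), and the route is exactly the one you take — note that $p(n)\sim n^\alpha$ with $\alpha\geq 1$ gives unbounded complexity and $\lim_n\frac{p(n+1)}{p(n)}=1$, then apply the equivalence of conditions \eqref{CondF:5} and \eqref{CondF:2}.
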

\begin{remark}
In the above corollary, it is not enough to ask for \emph{polynomial complexity} in the usual sense that $p(n)=\Theta(n^d)$ for some integer $d$, as this only implies that there exist constants $0<C_1\leq C_2$ with $1\leq\lim_n\frac{p(n+1)}{p(n)}\leq\frac{C_2}{C_1}$.
For more on the case $p(n)=\Theta(n^\alpha)$, see Corollary~\ref{Cor:SmallCompl}.
\end{remark}

We conclude this section with the following simple observation.

\begin{proposition}
If the complexity function~$p(n)$ of a regular factorial language is subexponential then is satisfies   $\lim_{n\to\infty}\frac{p(n+1)}{p(n)} = 1.$
\end{proposition}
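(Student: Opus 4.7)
My plan is to combine the rational structure that regularity imposes on $p(n)$ with a combinatorial exploitation of factoriality through the DFA.

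Since $\Lang$ is regular, $p(n)$ is $\N$-rational: writing $T$ for the non-negative integer transfer matrix of a finite automaton recognising $\Lang$ (with initial state $q_0$ and accepting set $F$), one has $p(n) = e_{q_0}^\top T^n \mathbf{1}_F$, and in particular $G_\Lang(x) = \sum_n p(n) x^n$ is rational. I would first appeal to the classical dichotomy for $\N$-rational sequences (Berstel--Reutenauer): subexponential growth forces polynomial growth, so $p(n) = \Theta(n^d)$ for some $d \ge 0$. Decomposing $T$ into Jordan blocks yields $p(n) = \sum_\lambda P_\lambda(n)\,\lambda^n$ with $|\lambda|\le 1$ for every contributing $\lambda$. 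Since $T$ is integer-valued, the $\lambda$ visible in $G_\Lang$ in lowest terms are algebraic integers whose Galois conjugates also belong to the closed unit disk, so Kronecker's theorem identifies the unit-modulus ones as roots of unity. Perron--Frobenius theory for non-negative matrices (Rothblum's index theorem) makes $\lambda = 1$ attain the maximum Jordan-block size, say $d+1$, among the unit-modulus eigenvalues, giving the refined expansion
\[
p(n) = n^d f(n) + O(n^{d-1}), \qquad f(n) = \sum_{\zeta} c_\zeta \zeta^n,
\]
over a finite set of roots of unity $\zeta$ including $\zeta=1$. The desired identity $\lim p(n+1)/p(n)=1$ is thus equivalent to $f$ being constant, i.e.\ $c_\zeta = 0$ for every $\zeta\neq 1$.

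The main obstacle is precisely this strict-dominance statement, and it is where factoriality has to be used. In any DFA recognising a factorial language, the accepting set $F$ is \emph{forward-closed}: if $\delta(q_0,u)\notin F$ then $\delta(q_0,uv)\notin F$ for every continuation $v$, since otherwise $uv\in\Lang$ would force its subword $u$ into $\Lang$. Hence $p(n)$ counts length-$n$ walks from $q_0$ inside the subgraph induced on $F$. Within that subgraph, every irreducible block of spectral radius exactly $1$ must have $\{0,1\}$ entries (a strictly larger entry would force the spectral radius above $1$), so the critical strongly connected components are simple directed cycles of lengths $h_0, h_1, \dots$. Polynomial growth of degree $d$ corresponds to the existence of chains of $d+1$ such cycles traversed by walks from $q_0$. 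For a fixed chain, the walk count splits according to the number of steps $t_i = a_i h_i + r_i$ spent in each cycle (the residues $r_i$ being fixed by the entry and exit vertices), reducing to the number of non-negative integer solutions of the linear Diophantine equation $\sum_{i=0}^d a_i h_i = n - C$. The Ehrhart--Popoviciu asymptotics give this count as $\frac{n^d}{d!\,\prod_i h_i} + O(n^{d-1})$, whose leading term is \emph{independent} of the residue class of $n$ modulo $\mathrm{lcm}(h_i)$; the oscillations contributed by $\zeta\neq 1$ are confined to $O(n^{d-1})$. Summing over all maximal chains yields $p(n) = C\,n^d + O(n^{d-1})$ with $C > 0$, so $c_\zeta = 0$ for every $\zeta\neq 1$ and $p(n+1)/p(n) \to 1$ follows.

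The remaining case $d = 0$ (bounded $p$) is immediate from Morse--Hedlund: a factorial language with bounded complexity has eventually constant $p$, so the ratio tends to $1$ trivially.
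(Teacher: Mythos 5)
Your overall strategy is sound and goes considerably deeper than the paper's own argument, which simply invokes the polynomial/exponential dichotomy for regular languages and the quasi-polynomiality of $p(n)$ in the polynomial case (citing Stanley) and then asserts the conclusion. You correctly isolate the real content of the statement, namely that the degree-$d$ periodic coefficient of the quasi-polynomial must be constant, and you correctly identify factoriality (forward-closedness of the accepting set, so that $p(n)$ counts length-$n$ walks from $q_0$ inside the subgraph induced on $F$) as the ingredient that rules out examples such as $(aa)^*b(aa)^*$, where $p(n)$ vanishes on an entire residue class.

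However, the step where you dispose of the oscillation is not correct as stated. For fixed entry and exit vertices in each cycle, the number of non-negative integer solutions of $\sum_{i=0}^{d} a_i h_i = n - C$ is \emph{not} $\frac{n^d}{d!\,\prod_i h_i} + O(n^{d-1})$ uniformly in $n$ unless $\gcd(h_0,\dots,h_d)=1$: if $g:=\gcd(h_0,\dots,h_d)>1$, the generating function $\prod_i(1-x^{h_i})^{-1}$ has poles of order $d+1$ at \emph{every} $g$-th root of unity, the count is supported on a single residue class modulo $g$, and its leading coefficient there is $g/(d!\prod_i h_i)$ rather than $1/(d!\prod_i h_i)$. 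Nothing in your argument prevents all critical cycle lengths in a chain from sharing a common factor (for instance all equal to $2$), so the Ehrhart--Popoviciu asymptotics alone do not confine the oscillations to $O(n^{d-1})$, and a nonzero $c_\zeta$ could survive at top degree. The gap is repairable inside your framework: by prefix-closedness every state reached along an accepting walk is itself accepting, so the walk may terminate at \emph{any} vertex of the last cycle of the chain; summing over these endpoints makes the residue $r_d$ run over a complete system of residues modulo $h_d$, and the last cycle contributes the factor $\frac{1+x+\cdots+x^{h_d-1}}{1-x^{h_d}}=\frac{1}{1-x}$ to the chain's generating function. This is what makes $x=1$ a pole of order $d+1$ while every other pole on the unit circle has order at most $d$, and hence what actually forces the leading coefficient to be constant. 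You need to make this endpoint summation explicit; without it the decisive comparison of pole orders is unjustified.
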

\begin{proof}
It is known that a regular language has either exponential or polynomial complexity (see~\cite{Stanley}). Moreover, in the latter case, the complexity function is a quasi-polynomial and, hence, satisfies $\lim_{n\to\infty}\frac{p(n+1)}{p(n)} = 1$. 
\end{proof}

\begin{corollary}
 The Rauzy graphs of any regular factorial almost prolongable language converge in the Benjamini-Schramm sense to~$\Z$.    
\end{corollary}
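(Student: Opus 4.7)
My plan is to combine the preceding Proposition with Theorem~\ref{Thm:Main} in the most direct way possible. Let $\Lang$ be a regular, factorial, almost prolongable language. Assuming its complexity is subexponential (the regime in which the preceding Proposition applies), the Proposition gives $\lim_{n\to\infty}\frac{p(n+1)}{p(n)} = 1$, which is exactly condition~(5) of Theorem~\ref{Thm:Main}. Since $\Lang$ is factorial and almost prolongable with unbounded complexity, the equivalence (5)$\iff$(2) of Theorem~\ref{Thm:Main} then yields Benjamini-Schramm convergence of the Rauzy graphs to~$\Z$.

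A short preliminary remark is needed to handle the edge cases. By Stanley's dichotomy (the same result invoked in the proof of the Proposition), a regular language has either polynomial or exponential complexity. The exponential case is outside the scope of the Proposition, and outside the conclusion of the Corollary as well, as the full shift already demonstrates; while the bounded polynomial case yields finite Rauzy graphs and is degenerate. Thus the substantive content of the Corollary is in the unbounded polynomial regime, where both the Proposition and Theorem~\ref{Thm:Main} apply cleanly. In this regime, moreover, the Ehrenfeucht--Rozenberg dichotomy already used in the proof of Theorem~\ref{Prop:Equivalence} guarantees $p(n) \geq n+1$, so ``unbounded'' and ``not eventually constant'' coincide.

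There is really no obstacle here: the proof is a two-line concatenation of previously established results. The only verification worth writing down is that the hypotheses of Theorem~\ref{Thm:Main} --- factoriality, almost prolongability, and unbounded complexity --- are indeed supplied by the Corollary's assumptions together with the conclusion of the Proposition, after which the implication (5)$\Rightarrow$(2) does all the work.
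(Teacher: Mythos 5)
Your proof is correct and is essentially the paper's own (implicit) argument: the corollary is stated without proof precisely because it is the immediate concatenation of the preceding Proposition with the equivalence $(5)\iff(2)$ of Theorem~\ref{Thm:Main}. Your remark that the corollary as literally stated requires the implicit hypotheses of unbounded, subexponential complexity (the full shift being regular, factorial and prolongable, yet its Rauzy graphs converge to a Diestel--Leader graph rather than to $\Z$) is a fair and correct reading of what the authors must have intended.
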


\subsection{Asymptotics of complexity function and convergence of Rauzy graphs}

In this subsection we present asymptotic bounds on the complexity function sufficient to conclude that the Rauzy graphs of a language converge to a line. 

Recall that $e_l(n)$ and $e_r(n)$ denote  the number of words of length $n$ that are not left, respectively not right, prolongable and $e(n)$ denotes the number of words of length $n$ that are neither left nor right-prolongable.
The estimates that we derive in this subsection are inspired by a paper by Cassaigne~\cite{MR1466182}. We need the following generalization of one of his results.
\begin{lemma}\label{Lemma:Sub}
Let $\Lang$ be a factorial and almost prolongable language with unbounded complexity and such that $e(n)=0$ for $n$ big enough.
Suppose that there exists $\alpha\in[0,\frac12]$ and $\beta\in[1,\frac32]$ such that $p(n)=O(n^\beta)$ and both $e_l(n)$ and $e_r(n)$ are $O(n^\alpha)$.
Then $p(n+1)-p(n)=O(n^{3\cdot\max(\alpha,\beta-1)})$.
\end{lemma}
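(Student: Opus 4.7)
The plan is to follow the bispecial-factor analysis of Cassaigne~\cite{MR1466182}, adapted to the almost prolongable setting. For each $w \in \Lang_n$ I introduce the bilateral multiplicity $b(w) := m_{lr}(w) - m_l(w) - m_r(w) + 1$, where $m_l(w), m_r(w), m_{lr}(w)$ count its left-, right-, and bi-extensions in $\Lang$. Summing the identities $\sum_{w \in \Lang_n} m_l(w) = \sum_w m_r(w) = p(n+1)$ and $\sum_w m_{lr}(w) = p(n+2)$ directly yields the second-difference identity
\[
p(n+2) - 2p(n+1) + p(n) = \sum_{w \in \Lang_n} b(w).
\]

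A case analysis shows that $b(w) = 0$ for every biprolongable, non-bispecial $w$ whose unique one-sided extension extends further: if $m_r(w) = 1$ with unique right-extension $wb$ and every left-extension $aw$ is right-prolongable, then by factoriality the right-extension of $aw$ must be $awb$, giving $m_{lr}(w) = m_l(w)$ and $b(w) = 0$ (the case $m_l(w) = 1$ is symmetric). The words for which this fails correspond to left- or right-only factors of length $n+1$, so they contribute $O(e_l(n+1) + e_r(n+1)) = O(n^\alpha)$ in total; non-biprolongable $w$ contribute $O(e_l(n) + e_r(n)) = O(n^\alpha)$; and each bispecial $w$ contributes $|b(w)| \leq (k-1)^2$ (using $m_{lr}(w) \leq m_l(w) m_r(w) \leq k^2$). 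Writing $s(n) := p(n+1) - p(n)$, this yields the step bound
\[
|s(n+1) - s(n)| \leq (k-1)^2 |BS(n)| + O(n^\alpha),
\]
where $BS(n) \subset \Lang_n$ denotes the set of bispecial factors of length $n$. Combined with the observation that every bispecial word is right-special, so $|BS(n)| \leq s_r(n) \leq s(n) + e_r(n)$ by Equation~\eqref{Eq:P}, this becomes the effective recurrence $|s(n+1) - s(n)| = O(s(n) + n^\alpha)$.

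The remaining and most delicate step is to combine this step bound with the integrated estimate $\sum_{j \leq n} s(j) = p(n+1) - k = O(n^{1+\gamma})$, where $\gamma := \max(\alpha, \beta-1)$, into the pointwise estimate $s(n) = O(n^{3\gamma})$. My plan is a bootstrap: given an induction hypothesis $s(j) \leq C j^\delta$ on $[0, n]$, the step bound controls how fast $s$ can grow near a local maximum, while the integrated estimate forbids $s$ from remaining large on a long interval. Balancing these two constraints improves the exponent $\delta$; iterating three times, each pass absorbing one factor of $\gamma$ from either the excess complexity $\beta - 1$ or the error exponent $\alpha$, stabilizes the bound at the stated exponent $3\gamma$.

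The main obstacle is executing this bootstrap cleanly. The step bound is circular in $s(n)$ (since $|BS(n)| \leq s(n) + O(n^\alpha)$), so a naive Gr\"onwall-type iteration would blow up exponentially, while integrating alone yields only the trivial estimate $s(n) \leq p(n+1) = O(n^\beta)$. The coupling between the spike amplitude, the length of any interval on which $s$ stays large, and the bispecial count $|BS(n)|$ must be exploited carefully, and the exponent $3$ arises because three independent constraints must be simultaneously satisfied; each iteration of the bootstrap contributes one factor of $\gamma$ to the decay. Carrying out this bookkeeping through three rounds, while tracking the $O(n^\alpha)$ error terms coming from non-prolongable words, is the technical heart of the argument.
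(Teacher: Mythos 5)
There is a genuine gap. Your setup (the bilateral-multiplicity identity $p(n+2)-2p(n+1)+p(n)=\sum_{w}b(w)$, the bound $\abs{b(w)}\leq(k-1)^2$ for bispecial factors, the $O(n^\alpha)$ corrections from non-prolongable words, and $\abs{BS(n)}\leq s(n)+O(n^\alpha)$) is a reasonable reconstruction of the skeleton of Cassaigne's argument, but the entire difficulty of the lemma lives in the step you describe only as a ``plan'': converting the integrated bound $\sum_{j\leq n}s(j)=O(n^{1+\gamma})$ together with the step recurrence $\abs{s(n+1)-s(n)}=O(s(n)+n^\alpha)$ into the pointwise bound $s(n)=O(n^{3\gamma})$. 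You correctly observe that the recurrence alone is useless (it is circular and Gr\"onwall-type iteration blows up) and that integration alone only gives $s(n)=O(n^\beta)$, and then assert that ``balancing these two constraints'' and ``iterating three times'' yields the exponent $3\gamma$ --- but no actual mechanism is given for why a spike of height $s(n)$ forces $s$ to remain comparably large on an interval of a specific length, nor why three rounds of whatever bootstrap you have in mind terminate at exactly $3\gamma$ rather than some other exponent. This is precisely the content of Cassaigne's Proposition 2 in \cite{MR1466182}, and as written your proposal re-derives its hypotheses but not its conclusion; the heart of the proof is missing.

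The paper takes a different and much lighter route: rather than redoing Cassaigne's analysis in the almost prolongable setting, it reduces to the prolongable case and invokes \cite[Proposition 2]{MR1466182} as a black box. Concretely, one adjoins a fresh letter $z$ and forms the language $\Lang'$ over $\A\cup\{z\}$ obtained by adding $z^m v$ for $v$ not left-prolongable, $wz^m$ for $w$ not right-prolongable, and $z^m u z^n$ for $u$ prolongable in neither direction. One checks that $\Lang'$ is factorial and prolongable, that $p_{\Lang'}(n)\leq p_\Lang(n)+Cn^{\alpha+1}=O(n^{\max(\alpha+1,\beta)})$ with $\max(\alpha+1,\beta)\in[1,\frac32]$, and that $p_{\Lang'}(n+1)-p_{\Lang'}(n)\geq p_\Lang(n+1)-p_\Lang(n)$; Cassaigne's proposition applied to $\Lang'$ then gives the stated bound $O(n^{3\max(\alpha,\beta-1)})$ directly. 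If you want to salvage your approach, the cleanest fix is to adopt this reduction, since it confines all the delicate spike-versus-integral analysis to the already published prolongable case.
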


Using this Lemma, we get the following corollary of our Theorem \ref{Prop:Equivalence}

\begin{corollary}\label{Cor:SmallComplGen}
Let $\Lang$ be a factorial and almost prolongable language with unbounded complexity and such that $e(n)=0$ for $n$ big enough.
Suppose that there exists $\alpha\in[0,\frac12)$, $\beta\in[1,\frac32)$ and $\gamma\in(3\cdot\max(\alpha,\beta-1),\beta]$ such that both $e_l(n)$ and $e_r(n)$ are $O(n^\alpha)$ and there exists constants $C_1$ and $C_2$ with
\begin{equation}\label{eq:betagamma}
	C_1n^\gamma\leq p(n)\leq C_2n^\beta
\end{equation}
for $n$ big enough.
Then the Rauzy digraphs of $\Lang$ converge to $\vec \Z$.
\end{corollary}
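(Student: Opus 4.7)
The plan is to reduce the statement to condition~\eqref{CondF:5} of Theorem~\ref{Prop:Equivalence}, namely $\lim_n p(n+1)/p(n) = 1$, which will then immediately give convergence of the Rauzy digraphs to~$\vec\Z$ via the implication \eqref{CondF:5}$\implies$\eqref{CondF:2} of that theorem.

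First, I would check that the hypotheses of Corollary~\ref{Cor:SmallComplGen} subsume the hypotheses of Lemma~\ref{Lemma:Sub}. Indeed, $\Lang$ is factorial and almost prolongable with unbounded complexity, $e(n) = 0$ for $n$ large, $p(n) = O(n^\beta)$ with $\beta \in [1, 3/2) \subset [1, 3/2]$, and $e_l(n), e_r(n) = O(n^\alpha)$ with $\alpha \in [0, 1/2) \subset [0, 1/2]$. Applying Lemma~\ref{Lemma:Sub} then yields the key estimate
\[
p(n+1) - p(n) = O\bigl(n^{3\max(\alpha, \beta - 1)}\bigr).
\]
Set $\delta \coloneqq 3\max(\alpha, \beta-1)$; by hypothesis $\gamma > \delta$.

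Next, I would combine this upper bound on the first difference with the lower bound $p(n) \geq C_1 n^\gamma$ from Equation~\eqref{eq:betagamma}. Dividing, we obtain
\[
0 \leq \frac{p(n+1) - p(n)}{p(n)} = O\bigl(n^{\delta - \gamma}\bigr),
\]
and since $\delta - \gamma < 0$, the right-hand side tends to~$0$ as $n \to \infty$. This is exactly $\lim_n \frac{p(n+1)}{p(n)} = 1$, so condition~\eqref{CondF:5} of Theorem~\ref{Prop:Equivalence} holds.

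Finally, since $\Lang$ is factorial, almost prolongable and of unbounded complexity, Theorem~\ref{Prop:Equivalence} applies and gives \eqref{CondF:5}$\implies$\eqref{CondF:2}: the Rauzy digraphs $\orR(n)$ converge to~$\vec\Z$. There is essentially no obstacle here beyond the already-established Lemma~\ref{Lemma:Sub}; the corollary is a clean packaging of that lemma with the polynomial lower bound in~\eqref{eq:betagamma}. The one point to watch is making sure the exponent arithmetic $\delta < \gamma$ is used correctly and that the relaxed endpoints ($\alpha < 1/2$, $\beta < 3/2$) in the corollary still fall into the closed ranges required by Lemma~\ref{Lemma:Sub}, which they do trivially.
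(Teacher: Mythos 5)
Your proposal is correct and follows essentially the same route as the paper: apply Lemma~\ref{Lemma:Sub} to bound $p(n+1)-p(n)$ by $Kn^{3\max(\alpha,\beta-1)}$, divide by the lower bound $C_1 n^\gamma$ to get $\lim_n \frac{p(n+1)}{p(n)}=1$, and conclude via Theorem~\ref{Prop:Equivalence}. The only addition is your explicit check that the corollary's half-open parameter ranges sit inside the closed ranges of the lemma, which the paper leaves implicit.
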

\begin{proof}
By  Lemma~\ref{Lemma:Sub}, the upper bound for $p(n)$ implies that $p(n+1)-p(n)\leq K n^{3\cdot\max(\alpha,\beta-1)}$ for some constant $K$ and for $n$ big enough.
Hence we obtain
\[
	0\leq\lim_n\frac{p(n+1)-p(n)}{p(n)}\leq \lim_n\frac{K n^{3\cdot\max(\alpha,\beta-1)}}{C_1n^\gamma}.
\]
We conclude by Theorem~\ref{Prop:Equivalence}.
\end{proof}

Observe that while $\alpha=\frac12$ and $\beta=\frac32$ are allowed in  Lemma~\ref{Lemma:Sub}, they are excluded for Corollary~\ref{Cor:SmallComplGen} as we ask there that $\gamma$ be smaller than $\beta$ and strictly bigger than $3\cdot\max(\alpha,\beta-1)$.

As a particular case of Corollary~\ref{Cor:SmallComplGen} we also have
\begin{corollary}\label{Cor:SmallCompl}
Let $\Lang$ be a factorial and almost prolongable language with unbounded complexity and such that $e(n)=0$ for $n$ big enough.
Suppose that there exists $\alpha$ and $\beta$ such that both $e_l(n)$ and $e_r(n)$ are $O(n^\alpha)$ and one of the following two conditions is satisfied:
\begin{enumerate}
\item $p(n)=O(n^\beta)$, $\alpha\in[0,\frac13)$ and $\beta\in[1,\frac43)$,
\item $p(n)=\Theta(n^\beta)$, $\alpha\in[0,\frac12)$ and $\beta\in[1,\frac32)$.
\end{enumerate}
Then the Rauzy digraphs of $\Lang$ converge to $\vec \Z$.
\end{corollary}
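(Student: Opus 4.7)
The plan is to deduce both cases directly from Corollary~\ref{Cor:SmallComplGen} by exhibiting, in each case, an exponent $\gamma$ lying in $(3\max(\alpha,\beta-1),\beta]$ together with a matching lower bound $p(n)\geq C_1 n^\gamma$; once such a $\gamma$ is produced, Corollary~\ref{Cor:SmallComplGen} immediately yields convergence of the Rauzy digraphs to $\vec\Z$.

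For case~(1), I would invoke the Ehrenfeucht--Rozenberg dichotomy already used in the proof of Theorem~\ref{Prop:Equivalence}: since $\Lang$ is factorial with unbounded complexity, one has $p(n)\geq n+1$, so I would take $\gamma=1$ with $C_1=1$. The right endpoint $1\leq\beta$ is immediate, and the strict inequality $1>3\max(\alpha,\beta-1)$ rewrites exactly as $\alpha<1/3$ and $\beta<4/3$, which are the hypotheses of case~(1). For case~(2), the assumption $p(n)=\Theta(n^\beta)$ supplies a matching lower bound $p(n)\geq C_1 n^\beta$, so I would take $\gamma=\beta$; the inequality $\gamma\leq\beta$ is then tautological, $3(\beta-1)<\beta$ reduces to $\beta<3/2$, and the remaining condition $3\alpha<\beta$ carves out the admissible region of parameters inside $[0,1/2)\times[1,3/2)$.

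All of the substantive work therefore resides in Lemma~\ref{Lemma:Sub} and its consequence Corollary~\ref{Cor:SmallComplGen}, which together provide the delicate bound $p(n+1)-p(n)=O(n^{3\max(\alpha,\beta-1)})$ linking the discrete derivative of the complexity to the density of non-prolongable words. Given these, the present corollary is a pure bookkeeping exercise in the two prescribed parameter ranges; the only subtlety I would flag is the inequality $3\alpha<\beta$ in case~(2), which is the point where the two sufficient conditions of Corollary~\ref{Cor:SmallCompl} become asymmetric and where one genuinely needs the $\Theta$ lower bound rather than only the $O$ upper bound.
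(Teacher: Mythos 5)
Your proof is correct and follows exactly the paper's route: case~(1) is Corollary~\ref{Cor:SmallComplGen} with $\gamma=1$ via the Ehrenfeucht--Rozenberg bound $p(n)\geq n+1$, and case~(2) is Corollary~\ref{Cor:SmallComplGen} with $\gamma=\beta$ from the $\Theta$ lower bound. Your flag about the extra condition $3\alpha<\beta$ in case~(2) is well taken and in fact sharper than the paper, whose proof simply asserts that the second assertion ``is Corollary~\ref{Cor:SmallComplGen} with $\gamma=\beta$'': for parameters such as $\alpha=0.4$, $\beta=1.1$ one has $3\alpha>\beta$, so $\gamma=\beta$ does not lie in $(3\max(\alpha,\beta-1),\beta]$ and the deduction requires $3\alpha<\beta$ as an additional hypothesis (harmless in the paper's applications, where effectively $\alpha=0$).
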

\begin{proof}
On the one hand, the second assertion is simply Corollary~\ref{Cor:SmallComplGen} with $\gamma=\beta$.
On the other hand, since $p(n)$ is unbounded, we have $p(n)\geq n+1$ and the first assertion is Corollary~\ref{Cor:SmallComplGen} with $\gamma=1$. But $1$ is in $(3\cdot\max(\alpha,\beta-1),\beta]$ if and only if both $\alpha<\frac13$ and $\beta<\frac43$.
\end{proof}
We end this subsection with a proof of Lemma~\ref{Lemma:Sub}.
\begin{proof}
If $\Lang$ is prolongable, then $\alpha=0$ and this is \cite[Proposition 2]{MR1466182} of Cassaigne. He also treated the case $e_l(n)\equiv 1$ and $e_r(n)\equiv 0$ by using an auxiliary language $\Lang'$ on $k+1$ letters.
We will now adapt Cassaigne's trick to the general case.
Let $z$ be a new letter not in $\A$ and let $\A'\coloneqq\A\cup\{z\}$.
Define a new language $\Lang'\subset\A'$ by
\begin{align*}
\Lang'\coloneqq\Lang&\cup\setst{z^nv}{n\in\N, v\in\Lang \textnormal{ not left-prolongable}}\\
	&\cup\setst{wz^m}{m\in\N, w\in\Lang \textnormal{ not right-prolongable}}\\
	&\cup\setst{z^muz^n}{m,n\in\N, u\in\Lang \textnormal{ neither left nor right-prolongable}}.
\end{align*}
By construction the language $\Lang'$ is factorial. We claim that it is also prolongable.
Let $t$ be in $\Lang'$. If $t$ already belongs to $\Lang$, then it is either left-prolongable in $\Lang$, or $zt$ is in $\Lang'$. In both cases, it is possible to prolong $t$ to the left in~$\Lang'$. The same holds for right-prolongation.
If $t$ is of the form $z^mv$, then $v$ is not left-prolongable in $\Lang$ and hence $z^{m+1}v$ is in $\Lang'$.
On the one hand, if $v$ is right-prolongable in $\Lang$, there exists $a\in\A$ such that $va$ is in $\Lang$. Using that $\Lang$ is factorial, we have that $va$ is not left-prolongable in $\Lang$ and hence $z^{m}va$ is in $\Lang'$.
On the other hand, if $v$ is not right-prolongable in $\Lang$ then $z^nvz$ is in $\Lang'$ by construction.
An analogous reasoning take care of the case $t=wz^n$.
Finally, words of the form $z^muz^n$ are obviously both left and right-prolongable.

We now need to compare the complexity functions of $\Lang$ and $\Lang'$.
Words of length $n$ in $\Lang$ are of the form $t$ for $t\in\A^n\cap\Lang$, $z^{n-i}v$ for $v\in\A^i\cap\Lang$ not left-prolongable, $wz^{n-j}$ for $w\in\A^j\cap\Lang$ not right-prolongable or $z^juz^{n-i-j}$ for $u\in\A^i\cap\Lang$ neither left nor right-prolongable.
In particular, any $u\in\A^i\cap\Lang$ which is neither left nor right-prolongable will add $n-i-1$ new words of the form $z^juz^{n-i-j}$ with both $j\neq 0$ and $n-i-j\neq0$.
This gives us
\[
	p_{\Lang'}(n)=p_\Lang(n)+\sum_{i=1}^{n-1}(e_{\Lang,l}(i)+e_{\Lang,r}(i)+e_\Lang(i)(n-i-1))
\]
Using that $e_\Lang(n)=0$ for $n$ big enough and that $e_{\Lang,l}(n)$ and $e_{\Lang,r}(n)$ are $O(\alpha)$ we obtain (for $n$ big enough)
\[
	p_{\Lang'}(n)\leq p_\Lang(n)+Cn^{\alpha+1}
\]
where $C$ is a constant.
Moreover, we have
\begin{align*}
	p_{\Lang'}(n+1)-p_{\Lang'}(n)&=p_{\Lang}(n+1)-p_{\Lang}(n)+e_{\Lang,l}(n)+e_{\Lang,r}(n)+\sum_{i=1}^ne_\Lang(i)\\
	&\geq p_{\Lang}(n+1)-p_{\Lang}(n).
\end{align*}
By hypothesis, there exists a real number $a>0$ and an integer $n_0$ such that we have $p_{\Lang}(n)\leq an^\beta$ for $n\geq n_0$.
Using that $\beta\geq 1$, for $n\geq n_0$ we have
\[
	p_{\Lang'}(n)\leq an^\beta+Cn^{\alpha+1}\leq (a+C)n^{\max(\alpha+1,\beta)}
\]
with $1\leq\max(\alpha+1,\beta)\leq\frac32$.
We can hence apply \cite[Proposition 2]{MR1466182} to $\Lang'$ to obtain
\[
	p_{\Lang}(n+1)-p_{\Lang}(n)\leq p_{\Lang'}(n+1)-p_{\Lang'}(n)\leq K n^{3\cdot\max(\alpha,\beta-1)}
\]
for some constant $K$ and for $n$ big enough.
\end{proof}
\subsection{Languages of one-sided subshifts}\label{Subsection:OneSided}
In this subsection, we finish the proof of Corollary \ref{Cor:SubshiftCor} and more generally show that Theorem~\ref{Thm:Main} applies to one-sided subshifts possessing a finite subset with a dense orbit.  The language of finite subwords of a one-sided subshift $\Sigma$ is always factorial and right-prolongable, but not necessarily prolongable. Hence, the only missing part is to prove that the conditions that we impose on a subshift imply almost prolongability of its language. 

There are several natural properties of a subshift that ensure the desired almost prolongability. For example, minimality or, more generally, topological transitivity is sufficient for prolongability of the language. It is known that for subshifts over a finite alphabet, topological transitivity implies existence of a dense orbit. Which, as we will see, guarantees almost prolongability. Moreover, existence of a finite set with dense orbit is enough for the same conclusion.
\begin{proposition}\label{Cor:DO}
   Let $\Sigma$ be a one-sided subshift with a finite subset with dense orbit. If $p(n)$ is unbounded then $\Lang$ is almost-prolongable. 
\end{proposition}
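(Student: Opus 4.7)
The plan is to show that $e_l(n)$ stays bounded (by the cardinality of the finite set with dense orbit) while $e_r(n)\equiv 0$, and then conclude via $p(n)\to\infty$.

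First I would record two general facts about the language $\Lang$ of any one-sided subshift. It is factorial, and every word is right-prolongable: any factor of an infinite sequence in $\Sigma$ is followed by some letter in that sequence. In particular $e_r(n)=0$ for every $n$, and since $\Lang$ is factorial and right-prolongable one has $p(n+1)\geq p(n)$, so the hypothesis that $p(n)$ is unbounded upgrades to $p(n)\to\infty$.

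Next, let $F=\{\omega_1,\dots,\omega_m\}\subset\Sigma$ be a finite subset whose $S$-orbit is dense. I would identify $\Lang$ with the set of all finite factors of the sequences $\omega_1,\dots,\omega_m$. The nontrivial inclusion uses density: given $v\in\Lang_n$, pick $\omega\in\Sigma$ and $j\geq 0$ with $v=\omega[j\ldots j+n-1]$; by density there are $l\geq 0$ and $i\leq m$ such that $S^l\omega_i$ agrees with $\omega$ on the first $j+n$ coordinates, whence $v$ appears as a factor of $\omega_i$ at position $l+j$.

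The key observation is then that if $v\in\Lang_n$ is not left-prolongable, every occurrence of $v$ inside any $\omega_i$ must be at position $0$, so $v$ is a prefix of some $\omega_i\in F$. Therefore $e_l(n)\leq|F|=m$ for every $n$. Combined with $p(n)\to\infty$ this gives $e_l(n)/p(n)\to 0$, and together with $e_r(n)\equiv 0$ this is exactly the definition of almost prolongability. The argument is mostly bookkeeping; the only slightly delicate point is the identification of $\Lang$ with the factors of the $\omega_i$, which is where the density assumption is genuinely used.
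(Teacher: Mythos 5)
Your proof is correct and follows essentially the same route as the paper's: both use density of the orbit of $F$ to show that any word that is not left-prolongable must be a prefix of one of the finitely many sequences in $F$, giving $e_l(n)\le\abs{F}$, while $e_r(n)\equiv 0$ and $p(n)\to\infty$ finish the argument. The only cosmetic difference is that you first identify $\Lang$ with the set of factors of the $\omega_i$ and then locate occurrences, whereas the paper argues directly by cases on which sequences of $\Sigma$ admit $v$ as a prefix.
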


It is sufficient to show the following. 
\begin{lemma}\label{Lem:Prolong}
Let $\Sigma$ be a one-sided subshift with a finite set $F$ with dense orbit. Then $e_l(n)$, the number of words of length $n$ that can't be left prolongated, is bounded above by $\abs F$.
\end{lemma}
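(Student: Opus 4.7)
My plan is to first characterize non-left-prolongable words of length $n$ as being precisely those $w \in \Lang_n$ which only ever appear at position $0$ in elements of $\Sigma$ (equivalently, only as prefixes). Indeed, if $w$ appears at some position $j \geq 1$ in some $y \in \Sigma$, then $y_{j-1}w$ is a factor of $y$ and gives a left-prolongation of $w$; conversely, any left-prolongation $aw \in \Lang$ occurs as a factor of some $y \in \Sigma$, placing $w$ at some position $\geq 1$ in $y$. Consequently, to bound $e_l(n)$ it suffices to bound the number of length-$n$ prefixes of elements of $\Sigma$ that belong to this special ``appears-only-as-prefix'' class.

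The heart of the argument is to show that every such non-left-prolongable $w$ must in fact be a prefix of some element of $F$. By definition $w \in \Lang$, so $w$ is a prefix of some $y \in \Sigma$. Density of the orbit of $F$ yields a sequence $S^{n_k}x^{(i_k)} \to y$ with $x^{(i_k)} \in F$. Since $F$ is finite, after passing to a subsequence we may assume $i_k = i$ is constant. The local topology then forces the length-$n$ prefix of $S^{n_k}x^{(i)}$ to coincide with $w$ for all sufficiently large $k$; in other words, $w$ appears at position $n_k$ in $x^{(i)}$.

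Now I would split on whether $n_k \geq 1$ for all sufficiently large $k$. If so, then for any such $k$ the letter $x^{(i)}_{n_k - 1}$ left-prolongates $w$ inside $x^{(i)} \in \Sigma$, contradicting non-left-prolongability. Hence $n_k = 0$ must occur infinitely often, which forces $y = x^{(i)}$, so in particular $w$ is a prefix of $x^{(i)} \in F$. Since the set of length-$n$ prefixes of elements of $F$ has cardinality at most $\abs{F}$, we conclude $e_l(n) \leq \abs{F}$.

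The only subtlety is keeping track of the subsequence extraction, which is painless thanks to finiteness of $F$; there is no real obstacle beyond carefully translating the density hypothesis into the prefix dichotomy above.
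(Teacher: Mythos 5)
Your proof is correct and takes essentially the same route as the paper's: both reduce the problem to showing that a non-left-prolongable word can only occur as a prefix, then use density of the orbit of the finite set $F$ plus a pigeonhole/subsequence argument to conclude that such a word must be a length-$n$ prefix of one of the $\abs{F}$ elements of $F$. Your write-up is somewhat more explicit about the subsequence extraction and the dichotomy on the shift exponents, but the underlying argument is the one in the paper.
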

\begin{proof}
By assumption we have $\Sigma=\overline{\bigcup_{i=1}^d\setst{S^m(w_{i})}{m\in\N}}$ for some $\omega_{i}$ in $\Sigma$.
Let $v$ be a finite subword of length $n$ of some $\omega$ in $\Sigma$.
We claim that $v$ can be left-prolongated, unless maybe when $v$ is the first $n$ letters of one of the $\omega_{i}$.
If $v$ is not the first $n$ letters of a sequence $\omega$ then the claim trivially holds. It is also the case if $v$ is the first $n$ letters of $S^m(\omega_{i})$ for $m\geq 1$.
Finally, if $v$ is the first $n$ letters of some limit point $\omega$, then there exists an index $i$ and infinitely many $m$ such that $v$ is the first $n$ letters of $S^m(\omega_{i})$.
\end{proof}
The proof of Corollary~\ref{Cor:SubshiftCor} is now complete.

We conclude this subsection with a simple example of a subshift whose complexity function~$p(n)$ is unbounded and satisfies $\lim_n\frac{p(n+1)}{p(n)}=1$  but its Rauzy graphs do not converge to a line. The reason for that is the absence of almost prolongability.   
\begin{example}\label{Ex:NonAlmProl}
Let $\Lang$ be the language of a one-sided subshift $\Sigma$ over a finite alphabet~$\A$ and $z$ be a letter not in $\A$. Assume that $p_\Lang(n)$ is unbounded and $\lim_n\frac{p_\Lang(n+1)}{p_\Lang(n)}=1$ (see Section~\ref{Sec:Applications} for examples of such subshifts). 

Let $\tilde \Sigma=\Sigma\cup z\Sigma$ be a new subshift with the language $\tilde\Lang\coloneqq\Lang\cup z\Lang$. Clearly, 
the complexity function $p_{\tilde \Lang}(n)$ is $p_\Lang(n) + p_\Lang(n - 1)$ and, therefore, satisfies the limit condition. However, the language $\tilde \Lang$ is not almost prolongable and, hence, the Rauzy graphs do not converge to a line.  
\end{example}
\section{Limit of labelled Rauzy digraphs}\label{Sec:Labelled}

As we already know, if the Benjamini-Schramm limit of (unlabelled) Rauzy digraphs for a subshift of unbounded subexponential complexity exists then it is~$\vec\Z$. Throughout this section, we will assume that such a limit does exist. Now let us introduce labels on our Rauzy digraphs. Recall that we label an arc connecting two words~$u$ and~$v$ by the last letter of~$v$.  In this case, the limit of labelled Rauzy digraphs, if it exists, will be a probability measure on rooted labelled digraphs supported on $\vec\Z$ and unimodular.  The limit can then be identified with a probability measure on the set of bi-infinite words invariant under the shift, as shown in Lemma \ref{inv}. In this section, we show that if the subshift is uniquely ergodic then the Benjamini-Schramm limit of labelled Rauzy digraphs exists and is the unique ergodic invariant measure for the subshift. In the case of subshifts of linear complexity, this was earlier proved in \cite{Drummond}.

In the case of a non-uniquely ergodic subsift, one can consider subsequential limits; we show that we can always obtain an invariant measure for a subshift as such a limit with respect to some subsequence of labelled Rauzy digraphs. In Section~\ref{counterexample} we present a  concrete example of a non-uniquely ergodic system where this procedure realizes a continuum of distinct invariant measures.

In this section, we restrict our considerations to the case of topologically transitive subshifts. That is,  given an infinite word~$\omega$, we define a closed subset~$\Sigma_\omega \subset \A^\N$ to be the closure of the orbit of~$\omega$ with respect to the shift map~$S$ on $\A^\N$. We will refer to the complexity function of the language coming from $\Sigma_\omega$ as the complexity of the word $\omega$. 
Let us recall the definition of \emph{words with uniform frequencies}. 

\begin{definition}
We denote by $\abs{\omega_k\cdots\omega_{k+n}}_u$ the number of occurrences of a finite word $u$ within a finite stretch of $\omega$:
\[
	\abs{\omega_k\cdots\omega_{k+n}}_u = \sum_{i=0}^{n-\abs{u}} I_{u}(S^{i+k}\omega),
\]
where $I_u$ is the indicator function of a cylinder set with base $u$.
The~word~$\omega$ has \emph{uniform frequencies} if for every $u$ there exists a constant $F_u(\omega)$ such that 
\[
	\lim_{n}\sup_{k} \abs{\frac{\abs{\omega_k\cdots\omega_{k+n}}_u}{n+1} - F_u(\omega)} = 0.
\]
\end{definition}

\begin{proposition}[Oxtoby ergodic theorem, \cite{MR47262}]
A word $\omega$ has uniform frequencies if and only if $(\Sigma_\omega, S)$ is uniquely ergodic. 
\end{proposition}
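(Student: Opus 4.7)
The plan is to invoke the standard characterization of unique ergodicity: a topological dynamical system $(X,T)$ with $X$ compact metric and $T$ continuous is uniquely ergodic if and only if for every $f\in C(X)$ the Birkhoff averages $\frac{1}{N}\sum_{i=0}^{N-1}f(T^i x)$ converge uniformly in $x\in X$ to a constant $L(f)$. The task then reduces to matching this continuous-function version of uniform averaging on $\Sigma_\omega$ with the cylinder-indicator version encoded in the uniform frequencies property of $\omega$.

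For the implication \emph{uniquely ergodic} $\Rightarrow$ \emph{uniform frequencies}, I would apply the uniform convergence above to the continuous indicator $f=I_u$ of a cylinder set; specializing $x=S^k\omega$ immediately yields uniform convergence in $k$ of the averages $\frac{1}{N}\sum_{i=0}^{N-1} I_u(S^{i+k}\omega)$, which is exactly the defining condition of uniform frequencies, with $F_u(\omega)=\int I_u\,d\mu$ (modulo harmless boundary effects from replacing $N$ by $n-\abs{u}+1$ and renormalizing by $n+1$).

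For the converse, assume $\omega$ has uniform frequencies. I would first promote the uniformity in $k$ along the orbit of $\omega$ to uniformity in $y$ over all of $\Sigma_\omega$: for any $y\in\Sigma_\omega$ and any large $n$, density of the orbit gives $k=k(y,n)$ such that the first $n$ letters of $S^k\omega$ coincide with those of $y$; since $I_u$ depends only on $\abs{u}$ coordinates, the two partial sums $\sum_{i=0}^{n-\abs{u}} I_u(S^i y)$ and $\sum_{i=0}^{n-\abs{u}} I_u(S^{i+k}\omega)$ coincide term by term, transferring the bound to~$y$. A Stone--Weierstrass step, using that the locally constant functions (the linear span of the $I_u$'s) form a unital $*$-subalgebra of $C(\Sigma_\omega)$ separating points, together with a three-$\varepsilon$ approximation extends uniform convergence of Birkhoff averages from cylinder indicators to arbitrary $f\in C(\Sigma_\omega)$. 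The resulting limit $L\colon C(\Sigma_\omega)\to\R$ is a positive linear functional of norm one, yielding by Riesz representation a Borel probability measure~$\mu$; its $S$-invariance follows from $\abs{L(f)-L(f\circ S)}\le \tfrac{2\|f\|_\infty}{N}$, and for any other $S$-invariant measure $\nu$ one has $\int f\,d\nu=\int L(f)\,d\nu=L(f)$ by dominated convergence on the uniformly converging Birkhoff averages, forcing $\nu=\mu$.

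The main obstacle is not in any single step but in the two bridges: the orbit-to-closure transfer (which crucially exploits that $I_u$ depends on only finitely many coordinates, making the uniformity-in-$k$ hypothesis upgrade to uniformity-in-$y$) and the density-based passage from cylinder indicators to all of $C(\Sigma_\omega)$. Both are routine once set up correctly, but care is needed with the index bookkeeping in the uniform frequencies definition (sum over $n-\abs{u}+1$ terms normalized by $n+1$) versus the more symmetric Birkhoff formulation (sum over $N$ terms normalized by $N$).
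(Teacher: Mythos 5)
The paper does not prove this statement: it is quoted as a classical result (Oxtoby's ergodic theorem) with a citation to \cite{MR47262}, so there is no in-paper argument to compare against. Your proof is the standard one and is correct. The forward direction is immediate from the characterization of unique ergodicity via uniform convergence of Birkhoff averages, applied to the (continuous, since cylinders are clopen) indicators $I_u$. For the converse, the two bridges you single out are indeed the only places where anything could go wrong, and you handle both properly: the orbit-to-closure transfer works exactly because $I_u$ depends on finitely many coordinates and every $y\in\Sigma_\omega$ agrees with some $S^k\omega$ on an arbitrarily long prefix, so the partial sums up to index $n-\abs{u}$ literally coincide; and the passage from locally constant functions to all of $C(\Sigma_\omega)$ via density plus a three-$\varepsilon$ estimate, followed by Riesz representation, invariance from the $\tfrac{2\|f\|_\infty}{N}$ telescoping bound, and uniqueness by integrating the uniformly convergent averages against any invariant $\nu$, is the textbook route. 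The normalization discrepancy ($n+1$ versus $n-\abs{u}+1$ terms) is, as you say, an $O(\abs{u}/n)$ correction and harmless. No gaps.
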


Let us recall that the natural extension~$(\tilde \Sigma, S)$ of a subshift~$(\Sigma, S)$ is a bi-sided subshift that consists of all bi-infinite words $\tau \in \A^\Z$ such that any finite subword of~$\tau$ belongs to the language of~$\Sigma$. 
It can be shown that the natural extension of a uniquely ergodic subshift is also uniquely ergodic.
Now, let us proceed with the proof of Theorem~\ref{Thm:MainUnifFreq}.

\begin{proof}[Proof of Theorem~\ref{Thm:MainUnifFreq}]

We follow the argument from~\cite{Drummond} where the case of linear complexity was treated. First, we apply Theorem~\ref{Thm:Main} to the subshift and conclude that the Benjamini--Schramm limit of its unlabelled Rauzy digraphs is the oriented line.

Let $u$ be a finite word of even length. The Benjamini--Schramm convergence of the Rauzy digraphs implies that there exists an unbounded non-decreasing sequence $\{d_n\}$ of integer numbers such that for any sufficiently large $n$, there exists a subset $E_n$ of vertices of $\orR(n)$ such that $\abs{E_n} = o(p(n))$ and each connected component of $\orR(n) \setminus E_n$ is an oriented segment of length at least~$d_n$.

For each segment of length $k$, we assign a finite sequence of labels that can be read following the orientation starting from the vertex with no ingoing edges. This sequence forms a finite subword of $\omega$ of length $k \geq d_n$. Due to the uniform frequencies condition, the number of occurrences of~$u$ in this word is $F_u(\omega) k + o(k)$. Each such occurrence represents a vertex in $\orR(n)$ whose $\frac{\abs{u}}{2}$--neighborhood is the oriented segment whose sequence of labels is exactly~$u$. Hence, the total number of such vertices in  $\orR(n)$ is $F_u(\omega) p(n) + o(p(n))$. Therefore, the labelled Rauzy digraphs converge to a measure on $\A^\Z$ such that the cylinder set with base $u$ has measure $F_u(\omega)$. That is, this limit coincides with the unique invariant measure on $(\tilde \Sigma_\omega, S)$.

\end{proof}

We conclude this section with the following proposition showing that even if a subshift is not uniquely ergodic, we can represent some of its invariant measures as the limit of labelled Rauzy digraphs. However, to do that we have to consider only subsequential limits since the actual limit may not exist in general (see Section~\ref{counterexample}). 

\begin{proposition}\label{Prop:SubMeasure}
    Let $(\Sigma, S)$ be a subshift of unbounded subexponential complexity over a finite alphabet that has a finite subset with dense orbit. Then there exists a sequence $\{n_k\}$ such that the Benjamini--Schramm limit of the labelled Rauzy digraphs $\orR(n_k)$ exists and can be identified with an invariant probability measure on $(\tilde \Sigma,S)$. 
\end{proposition}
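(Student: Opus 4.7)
The plan is to combine compactness of the space of probability measures on rooted labelled digraphs with the subsequential graph-convergence furnished by Lemma~\ref{Lem:SubExp}, then to invoke Lemma~\ref{inv} to obtain shift-invariance and use factoriality to pin down the support. First, Proposition~\ref{Cor:DO} applied to $\Sigma$ (which has a finite subset with dense orbit and unbounded complexity) guarantees that the language $\Lang$ is almost prolongable. Combined with subexponential growth, Lemma~\ref{Lem:SubExp} produces an increasing sequence $\{m_j\}$ along which $\Rauz(m_j)\to\Z$ in the Benjamini--Schramm sense. Inspecting the proof of that lemma (which is built on uniqueness of preimages under the middle-subword map $\Lang_{n+2d}\to\Lang_n$) shows that for a typical vertex the $d$-neighbourhood is actually an \emph{oriented} segment of $\vec\Z$, so $\orR(m_j)\to\vec\Z$ as well.

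Next, since all our digraphs have degree bounded by $2|\A|$, the space of (isomorphism classes of) rooted labelled digraphs is compact, and hence so is the space of probability measures on it in the weak topology. By passing to a subsequence $\{n_k\}\subseteq\{m_j\}$, the empirical measures on the labelled Rauzy digraphs $\orR(n_k)$ converge weakly to some probability measure $\mu$. The push-forward of $\mu$ under the label-forgetting map is the Benjamini--Schramm limit of the unlabelled digraphs along $\{n_k\}$, hence equals the Dirac mass at $(\vec\Z,0)$; therefore $\mu$ is concentrated on arc-labellings of $\vec\Z$, which we identify with $\A^\Z$.

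As a Benjamini--Schramm limit, $\mu$ is unimodular, so Lemma~\ref{inv} applies and $\mu$ is invariant under the shift $S$ on $\A^\Z$. It remains to check that $\mu$ is concentrated on $\tilde\Sigma$. For a $\mu$-typical word $\tau\in\A^\Z$ and any finite subword $u$ of $\tau$, weak convergence of the labelled Rauzy digraphs together with $\orR(n_k)\to\vec\Z$ force $u$ to appear, for all sufficiently large $k$, as the label sequence read along an oriented segment of length $|u|-1$ inside $\orR(n_k)$. By the very definition of the labelling, such a label sequence is a subword of some word in $\Lang_{n_k+1}$, so factoriality of $\Lang$ gives $u\in\Lang$. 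Since this holds for every finite subword of $\tau$, we obtain $\tau\in\tilde\Sigma$, and thus $\mu(\tilde\Sigma)=1$.

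The most delicate step is the upgrade from undirected convergence (delivered by Lemma~\ref{Lem:SubExp}) to the directed convergence needed to apply Lemma~\ref{inv}; this is handled by re-reading the preimage-uniqueness argument in the proof of Lemma~\ref{Lem:SubExp} to extract directed local structure. The remaining ingredients -- compactness of the label space and the factoriality argument for containment in $\tilde\Sigma$ -- are standard.
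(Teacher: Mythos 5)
Your proposal is correct and follows essentially the same route as the paper: Lemma~\ref{Lem:SubExp} for subsequential convergence of the unlabelled digraphs to $\vec\Z$, weak compactness of $\mathbf P(\G)$ to extract a further subsequence along which the labelled digraphs converge, Lemma~\ref{inv} for shift-invariance, and factoriality to locate the support in $\tilde\Sigma$. You are in fact slightly more careful than the paper in two places it glosses over — invoking Proposition~\ref{Cor:DO} to secure almost prolongability, and upgrading the undirected convergence of Lemma~\ref{Lem:SubExp} to directed convergence before applying Lemma~\ref{inv}.
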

\begin{proof}

    First, we apply Lemma~\ref{Lem:SubExp} to $\Lang_\Sigma$ and find a sequence $\{n_k^\prime\}$ such that the underlying unlabelled Rauzy digraphs $\orR(n_k)$ converge to $\vec{\Z}$. Then, due to the compactness argument, we can choose a subsequence $\{n_k\} \subset \{n_k^\prime\}$ such that the Benjamini--Schramm limit along $n_k$ of the labelled Rauzy digraphs exists. This limit is supported on the set of labelings of  $\vec \Z$ and, by Lemma \ref{inv}, can be identified with a shift-invariant measure on $\A^\Z$. Clearly, the measure of any cylinder set with base~$u$ is positive only if $u$ belongs to the language of~$(\Sigma,S)$. Hence, the limiting measure is a shift-invariant measure supported on the natural extension of the system. 
\end{proof}

\section{Examples and applications}\label{Sec:Applications}
In this section we explain how various dynamical systems fit in the framework of our main results (Theorem ~\ref{Thm:Main}, Corollary~\ref{Cor:SubshiftCor} and Theorem \ref{Thm:MainUnifFreq}).

\subsection{Symbolic dynamical systems}

Recall that a \emph{substitution} is a mapping from an alphabet $A$ to the set of words $A^{*}$. It can be extended to a monoid morphism $A^{*}\to A^{*}$ and further to a morphism $\phi: A^\mathbf N \to A^\mathbf N$.  A \emph{pure morphic sequence} is a fixed point of such a morphism. A natural generalization of this notion is that  of a \emph{morphic sequence}, that is, the image of a pure morphic sequence under some coding (a morphism $A\rightarrow B$ between two alphabets).

It was proved in~\cite{Dev} that the complexity function~$p$ of a morphic sequence is either~$p(n)= \Theta(n^{1+1/k})$ for some $k\in\N$, or $p(n) = O(n\log n)$.  
Rauzy graphs in the linear complexity case (corresponding to primitive substitutions) were studied in the literature (see, for example, \cite{Frid}). We are now able to identify the limit of the Rauzy graphs for morphic sequences of superlinear complexity. Indeed, one can easily check that if $k>2$, all the conditions of Theorem \ref{Thm:Main} and Corollary \ref{Cor:SubshiftCor} are satisfied. 
Therefore, we prove the following 
\begin{corollary}
    For $k > 2$, the Benjamini-Schramm limit of the Rauzy graphs corresponding to a morphic sequence~is~$\Z$.
\end{corollary}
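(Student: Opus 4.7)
The plan is to verify the hypotheses of Corollary~\ref{Cor:SubshiftCor} for a morphic sequence $\omega$ and then invoke it directly; no new ideas should be required, since the authors flagged this as a routine check.

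First, I would associate to $\omega$ the one-sided subshift $\Sigma_\omega = \overline{\{S^m\omega : m\in\N\}}$. By construction the singleton $\{\omega\}$ is a finite subset of $\Sigma_\omega$ whose forward orbit is dense, which is precisely the hypothesis needed to invoke Proposition~\ref{Cor:DO}: once we know that the complexity function of $\Sigma_\omega$ is unbounded, it follows that the language is almost prolongable. Note also that the language is automatically factorial.

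Next, I would bring in the complexity dichotomy of~\cite{Dev} cited above: the complexity $p(n)$ of a morphic sequence is either $\Theta(n^{1+1/k})$ for some $k\in\N$, or $O(n\log n)$. Under the assumption $k>2$ in the first alternative, we have $p(n)=\Theta(n^\beta)$ with $\beta = 1+\tfrac1k \leq \tfrac43 < \tfrac32$. In particular $p(n)$ is unbounded, which together with the previous paragraph confirms almost prolongability. These two facts fit exactly condition~(2) of Corollary~\ref{Cor:SubshiftCor}, so that corollary delivers Benjamini--Schramm convergence of the Rauzy digraphs to $\vec\Z$ and of the Rauzy graphs to $\Z$.

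There is essentially no obstacle to carrying this out; the only point that needs attention is the matching of exponent ranges. The restriction $k>2$ is precisely what places $\beta = 1+1/k$ strictly below $3/2$ so that condition~(2) of Corollary~\ref{Cor:SubshiftCor} applies; the borderline values $k=1$ and $k=2$ would give $\beta\in\{2,\tfrac32\}$, which are not covered by the stated asymptotic conditions. One could additionally remark that for $k\geq 4$ the stronger condition~(1) of Corollary~\ref{Cor:SubshiftCor} applies as well, but this observation is not needed to conclude.
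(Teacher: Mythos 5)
Your proposal is correct and follows exactly the route the paper intends: it verifies the hypotheses of Corollary~\ref{Cor:SubshiftCor} (factoriality, the dense-orbit condition yielding almost prolongability via Proposition~\ref{Cor:DO}, and unboundedness of $p(n)$), and then uses the dichotomy of~\cite{Dev} to place $p(n)=\Theta(n^{1+1/k})$ with $1+\tfrac1k\le\tfrac43<\tfrac32$ for $k>2$, so that condition~(2) of that corollary applies. The paper states this verification as a routine check without spelling it out, and your write-up supplies precisely those details, including the correct identification of why $k=1,2$ fall outside the stated exponent ranges.
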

Note, however, that there are examples (see \cite{Dev})  with $k=1$, and so not all the morphic sequences are covered by the Corollary.

The study of morphic sequences and their invariant and/or ergodic measures is an important step in the understanding of the dynamical properties of tiling systems and of quasi-crystals (see, for example, \cite{Be}). In certain special cases the tiling space of a word has been shown to be homeomorphic to the inverse limit of its Rauzy graphs (\cite{Ju}); the Benjamini-Schramm limit of the Rauzy graphs provides therefore an invariant measure on the associated tiling space.

Next, we consider Toeplitz subshifts. 
Theorem A in \cite{Kos98} claims that for any rational number $\frac{p}{q}$ and any function $f(n)=o(n^\alpha)$ such that $nf'(n)=o(n^\alpha)$ there exists a Toeplitz sequence whose complexity satisfies the following condition:
\[c_{1}f(n)n^{\frac{p}{q}}\le p(n)<c_{2}f(n)n^{\frac{p}{q}}.\]
So, if we takes parameters $\alpha$ and $\frac{p}{q}$ such that
$\alpha + \frac{p}{q} < \frac43$ we can apply Corollary~\ref{Cor:SubshiftCor} and conclude that the Benjamini-Schramm limit of the Rauzy graphs of the subshift is the line. It also holds for the so-called simple Toeplitz subshifts considered in \cite{Se18} as they have linear complexity. 

Another family of so called $(p,q)$-Toeplitz shifts is given in \cite{CK97}. It is shown that this family has polynomial complexity (see Theorem 7 in \cite{CK97}) and it follows easily from the proof of Lemma 8 in \cite{CK97} that the asymptotics of $p(n+1)-p(n)$ is given by $O(n)$. Thus, for this series of Toeplitz subshifts, the Benjamini-Schramm limit of corresponding Rauzy graphs is also~$\Z$.

\subsection {Interval Translation Mappings}
Interval translation mappings (ITMs) were introduced by M. Boshernitzan and I. Kornfeld (see \cite{BoKo}) as a natural generalization of one of the most famous one-dimensional dynamical systems - interval exchange trans\-for\-mations (IETs). IETs were defined in 60s in connection with the study of measured foliations on oriented surfaces and billiards in rational polygons. Namely, IETs appear as the first return map to the transversal for the leaves of the measured foliation (or, equivalently, for the vertical translation flow on the translation surface or billiard flow in a rational billiard). We refer to \cite[Chapter 5]{CFS} for basics about IETs.

\begin{definition}
    \emph{Interval exchange transformation} (IET) is a bijective piecewise continuous map from the half-interval of real axis to itself with a finite number of discontinuity points, such that on each interval of continuity this map is a translation. 
    
	An \emph{interval translation map} is a piecewise translation map $T$ defined on an half-open interval $I \subset \field R$ with values in $I$.
	We call $T$ a $d$-interval translation map (or $d$-ITM) if $I$ has exactly $d$ maximal half-open sub-intervals to which the restriction of the $T$ is a translation.
\end{definition}

As follows from the definition, both IETs and ITMs are defined by a finite number of parameters. 

From the point of view of symbolic dynamics, IETs admit the following natural description: each interval of continuity can be denote by a separate letter, and thus any orbit is given by the (possibly, periodic) word of a finite alphabet, and the application of the IET is given by a one-side shift on such a word. It is easy to estimate and very often even to compute complexity of an IETs on $d$ intervals: $p(n)\le (d-1)n+1$ and it is a precise equality for all generic maps (generic means that the lengths of the intervals of continuity are rationally independent).  Such IETs are known to be minimal and uniquely ergodic. Therefore Drummond's result \cite{Drummond} applies and gives us a description of the unique invariant ergodic measure as the limit of (labelled) Rauzy digraphs.

Boshernitzan and Kornfeld (\cite{BoKo}) noticed that ITMs can be divided into two classes in the following way: 
looking at the sequence $\Sigma_k = I\cap T(I)\cdots \cap T^{k}(I)$
we distinguish two cases. Either there exists $m\in\mathbf{N}$ such that $\Sigma_m = \Sigma_{m+1}$, then we say that an ITM $T$ is \emph{of finite type}. Morally it means that $T$ can be reduced to an IET. If such an $m$ does not exist and the limit of $\Sigma_m$ is a nowhere dense set, we say that $T$ is \emph{of infinite type}. 
A challenging question is to understand the  maps of infinite type because only these ITMs represent some dynamics crucially different from IETs. 
Additional motivation to study ITMs of infinite type comes from the geometric group theory, where a very similar object, named \emph{systems of isometries} or \emph{band complex}, appears in connection with actions of free groups on $\mathbf{R}$-trees (see \cite{GLP} for the definitions); ITMs of infinite type can be seen as a particular case of systems of isometries of so called thin type. 

ITMs admit symbolic description  similar to IETs:  encode each interval of continuity by a letter, and hence get every orbit encoded by an infinite word in this finite alphabet and the ITM acting by the one-sided shift. 

Bruin and Troubetzkoy (\cite{BTr}) studied a 2-parameter family of ITM's of linear complexity, where a typical map of infinite type is uniquely ergodic (\cite{Skr}). Hence Drummond's result \cite{Drummond} applies and gives a description of the unique ergodic invariant measure as the limit of the labelled Rauzy digrpahs.

In general, the complexity of an ITM is at most polynomial \cite{BoKo}. The next candidate for a similar analysis is the class of the so-called ''double rotations"  since we know that typical double rotation of infinite type is uniquely ergodic (\cite{ArFoHuSkr}). However, no exact estimate on the complexity of double rotations of infinite type is known.

\subsection{Polygonal and polyhedral billiards}
An important class of dynamical systems that fit in the framework of Theorem ~\ref{Thm:Main} but is not covered by Drummond's results are billiards in rational polygons and, more generally, polyhedra. 

Given a polygon $Q$ with a finite number of sides, we can consider a billiard ball, i.e. a point mass, that moves inside the polygon $Q$ with unit speed along a straight line until it reaches the boundary, then instantaneously changes direction according to the mirror law. Billiards are widely studied in different branches of mathematics, for example, in connection with Hamiltonian systems or geodesic flows on Riemannian manifolds (see, for example, \cite{Ta}). 
We call the polygon $Q$ \emph{rational}, if all the angles of $W$ are equal to $\frac{p}{q}\pi$ for some natural $p$ and $q$. Billiards in rational polygons play a particular role in Teichm\"uller dynamics due to their connections with interval exchange transformations and measured foliations on surfaces.

A billiard dynamical system admits a natural coding: we label the sides of the polygon by symbols from a finite alphabet whose cardinality is equal to the number of sides of $Q$ and code the orbit by the sequence of sides it hits. Application of the billiard map in a given direction defines a shift of the word provided by the orbit.
Then there are two natural ways to define a subshift via this coding. The \emph{general subshift}~$\Sigma^g$ consists of the infinite words corresponding to all the orbits of the billiard and the \emph{directional subshift}~$\Sigma^{d,v}$ consists of words corresponding to all the orbits in a given direction~$v$. 
These subshifts give rise to two different approaches toward the study of complexity of billiard systems. The \emph{general complexity}~$p_g(n)$ (i.\,e., complexity of $\Sigma^g$) includes all the directions, while the \emph{directional complexity}~$p_{d,v}(n)$ (i.\,e., complexity of $\Sigma^{d,v}$) counts only trajectories in one (typically, irrational) direction~$v$. 

It is known that directional complexity for the rational polygonal billiard is linear (\cite{Hu95}), and, hence, we can apply~\cite{Drummond}. 
The general complexity of a rational polygonal billiard grows polynomially.
It was proved in~\cite{CaHuTr02} that for any rational convex polygon, there exist positive constants $D_1$, $D_2$ such that
\begin{equation}\label{eq_polygons}
    D_1<\frac{p_g(n)}{n^3}<D_2.
\end{equation}
Moreover, for certain billiard tables such as the unit square, the isosceles right triangle, or the equilateral triangle the limit $\lim_{n\to\infty}\frac{p_g(n)}{n^3}$ exists (see \cite{CaHuTr02}). Let us note that there are no known examples when this limit does not exist.
Using relation~\eqref{eq_polygons} together with the bounds by Masur~\cite{Ma} we obtain that for any rational convex polygon $Q$ 
\[
	\lim_{n\to\infty}\frac{p_g(n+1)}{p_g(n)}=1.
\]
Since the language of $\Sigma^g$ is prolongable we obtain the following. 
\begin{corollary}
    The Benjamini-Schramm limit of the Rauzy graphs, associated with the language of all trajectories of the billiard flow in the rational polygon, is equal to $\Z$.
\end{corollary}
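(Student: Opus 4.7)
The plan is to deduce the statement directly from Theorem~\ref{Thm:Main}, so the work is in verifying the hypotheses and in particular condition~(5), namely $\lim_n p_g(n+1)/p_g(n)=1$. Three of the hypotheses are essentially free: the language $\Lang$ of the general subshift $\Sigma^g$ is factorial (any factor of a coding of a billiard orbit is itself a coding of a subpiece of that orbit); it is prolongable --- hence almost prolongable --- because every billiard orbit coding can be extended in both directions; and $p_g(n)$ is unbounded since the lower bound $p_g(n) > D_1 n^3$ in \eqref{eq_polygons} goes to infinity.

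It remains to justify the ratio limit stated just before the corollary. Note that the bound \eqref{eq_polygons} alone only pins the ratio $p_g(n+1)/p_g(n)$ into the interval $[1, D_2/D_1]$, so a finer input is needed. The plan is to control the first difference $p_g(n+1)-p_g(n)$ directly via Masur's quadratic estimates \cite{Ma} on the number of generalized diagonals of length at most~$T$: each new factor of length $n+1$ corresponds to a bispecial/right-special extension arising from a generalized diagonal whose length is comparable to $n$, and Masur's bound ensures that the count of these is $O(n^2)$. Combining $p_g(n+1)-p_g(n) = O(n^2)$ with $p_g(n) \geq D_1 n^3$ yields $p_g(n+1)/p_g(n) - 1 = O(1/n)$, whence the limit equals $1$.

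With all hypotheses of Theorem~\ref{Thm:Main} in hand, the implication (5)$\Rightarrow$(2) immediately gives convergence of the Rauzy graphs $\Rauz(n)$ to $\Z$, which is the claim of the corollary.

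The main obstacle is the second step: translating the geometric counting results of Masur and of \cite{CaHuTr02} into the combinatorial statement $p_g(n+1)-p_g(n)=O(n^2)$. Once this quantitative input is available, combining it with the cubic lower bound in \eqref{eq_polygons} makes condition~(5) immediate and the rest of the argument is a direct application of Theorem~\ref{Thm:Main}.
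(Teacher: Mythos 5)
Your proposal is correct and follows essentially the same route as the paper: the authors likewise verify that the language of $\Sigma^g$ is factorial, prolongable and of unbounded complexity, and then obtain $\lim_n p_g(n+1)/p_g(n)=1$ by combining the cubic bounds of \cite{CaHuTr02} with Masur's quadratic counting estimates before invoking Theorem~\ref{Thm:Main}. Your write-up merely makes explicit the mechanism (controlling the first difference $p_g(n+1)-p_g(n)=O(n^2)$ against the lower bound $D_1 n^3$) that the paper leaves implicit in the phrase ``together with the bounds by Masur.''
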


Not much is known about the complexity of polyhedral billiards. However, we can apply our results to directional subshifts corresponding to hypercubic billiards.  
Let $Q$ be a hypercube of dimension~$s+1$ and let~$v$ be a totally irrational direction (i.\,e., such
that its components are independent over $\Q$ as well as their inverses). It is known that the billiard flow in~$\Q$ in direction~$v$ is uniquely ergodic as well as the corresponding directional subshift (see, e.\,g.,~\cite{ArMaShiTa94} for discussion of the three-dimensional case). Moreover, the following explicit polynomial formula for the directional complexity was shown in~\cite{Ba}
\[
	p_{d, v}(n) = \sum_{i=0}^{inf(n,s)} \frac{n!s!}{(n-i)!i!(s-i)!}.
\]
Thus, $p_{d, v}(n) \sim n^{s}$ and billiard trajectories for a given irrational direction are also covered by our main results. Applying Corollary~\ref{Cor:SubshiftCor} and Theorem~\ref{Thm:MainUnifFreq}, we get the following 
\begin{proposition}
The Rauzy graphs of the language of hypercubic billiard trajectories in a given totally irrational direction converge to~$\Z$. Moreover, for almost every such direction, the unique invariant measure for the directional subshift  can be identified with the limit of the labelled Rauzy digraphs. 
\end{proposition}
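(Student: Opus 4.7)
The plan is to invoke the two main results of the paper, Corollary~\ref{Cor:SubshiftCor} and Theorem~\ref{Thm:MainUnifFreq}, applied to the directional subshift $\Sigma^{d,v}$. First, for the unlabelled convergence statement, I would observe that the language of $\Sigma^{d,v}$ is factorial (any subword of a billiard trajectory is itself realised by a translate of that trajectory), and that Baryshnikov's formula gives the leading asymptotic $p_{d,v}(n)\sim \binom{n}{s}s!\sim n^s$, i.e.\ precisely condition~(3) of Corollary~\ref{Cor:SubshiftCor} with $\gamma=s$. For $s\geq 1$ the complexity is unbounded. Since $v$ is totally irrational, the linear flow on the $(s+1)$-torus in direction $v$ is minimal, so every orbit in $\Sigma^{d,v}$ is dense; in particular $\Sigma^{d,v}$ admits a singleton set with dense orbit, and Proposition~\ref{Cor:DO} then secures almost prolongability. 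Corollary~\ref{Cor:SubshiftCor} now yields convergence of the Rauzy digraphs to $\vec\Z$ and of the Rauzy graphs to $\Z$.

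For the second assertion, I would fix any trajectory-coding word $\omega$ with $\Sigma_\omega=\Sigma^{d,v}$. For almost every totally irrational direction the directional subshift is uniquely ergodic, and by the Oxtoby ergodic theorem this is equivalent to $\omega$ having uniform frequencies. The complexity is unbounded and, by the asymptotic $p(n)\sim n^s$ from the previous paragraph, satisfies $\lim_n p(n+1)/p(n)=1$. All hypotheses of Theorem~\ref{Thm:MainUnifFreq} are therefore in place, and the Benjamini--Schramm limit of the labelled Rauzy digraphs exists and can be identified with the unique $S$-invariant measure on the natural extension $(\tilde\Sigma^{d,v},S)$, which is what the second half of the proposition asserts.

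Given how directly the two results apply, I do not anticipate any substantial difficulty. The only point one has to record with some care is that the one-sided version of Corollary~\ref{Cor:SubshiftCor} requires a finite set with dense orbit, which here is delivered by the minimality of the toral translation flow in direction $v$ rather than by unique ergodicity alone. Once that is pointed out, the rest reduces to a routine verification of the asymptotic hypotheses against Baryshnikov's explicit formula.
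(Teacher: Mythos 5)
Your proposal is correct and follows essentially the same route as the paper: Baryshnikov's formula gives $p_{d,v}(n)\sim n^s$, so condition~(3) of Corollary~\ref{Cor:SubshiftCor} yields convergence to $\Z$, and unique ergodicity for almost every totally irrational direction together with Theorem~\ref{Thm:MainUnifFreq} identifies the limit of the labelled Rauzy digraphs with the unique invariant measure. Your explicit verification of the dense-orbit hypothesis via minimality of the toral translation flow is a detail the paper leaves implicit, but it is the intended justification.
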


\section{A non uniquely ergodic example and subsequential limits of Rauzy digraphs}\label{counterexample}
We conclude the paper with an example of a non-uniquely ergodic system where we identify a continuum of invariant measures with subsequential limits of labelled Rauzy digraphs. 
In a recent paper~\cite{CK22}, Cassaigne and Kabor\'e constructed a uniformly recurrent infinite word $u$ with complexity bounded by $3n+1$ that does not have uniform frequencies; see also \cite{Keane} for an older example with similar properties. The linear bound on the complexity implies that the unlabelled Rauzy digraphs converge to $\vec\Z$ due to Corollary~\ref{Cor:SubshiftCor}.   

Their construction of such a word goes as follows.
For certain very fast growing sequences $(l_i)_i$, $(m_i)_i$ and $(n_i)_i$, consider two sequences $(u_i)_i$ and $(v_i)_i$ of finite binary words defined by the rule $u_0 = 0$, $v_0 = 1$ and for any non-negative integer $i$, $u_{i+1} = u_i^{m_i}v_i^{l_i}$ and  $v_{i+1} = u_i^{m_i}v_i^{n_i}$.
The word $u$ is the limit of finite words $u_i$'s.
The exact values of the parameters that can be used for our purpose are given by
\[
	l_i = 2^{2\cdot2^i+4}, \quad m_i = 2^{8\cdot2^i}\quad \textnormal{and}\quad n_i = 2^{10\cdot2^i}.
\]
 It is proved in~\cite{CK22} that the complexity $p(n)$ of the word $u$ is bounded from above by $3n+1$ and, moreover, $p(n+1) - p(n) \leq 3$.
 At the same time, they show that $\liminf \frac{p(n)}{n} = 2$.
 This equality implies that the corresponding subshift has at most $2$ invariant ergodic probability measures \cite{Boshernitzan1984}.
 It is also shown in~\cite{CK22} that $u$ does not have uniform frequencies and, therefore, the corresponding subshift is not uniquely ergodic and has hence exactly two invariant ergodic probability measures. 
 The key observation is that $u_i$'s and $v_i$'s have different frequencies of $0$:
\[
 	\lim\limits_{i \to \infty} \frac{\abs{u_i}_0}{\abs{u_i}} = \alpha > \lim\limits_{i \to \infty} \frac{\abs{v_i}_0}{\abs{v_i}} = \beta.
\]
One can see two ergodic invariant probability measures by considering two weak limits:
\[
	\mu_\alpha = \lim \frac{1}{\abs{u_i}} \sum_{k = 0}^{\abs{u_i}-1} \delta\{S^k u\}
	\quad\textnormal{and}\quad 
	\mu_\beta = \lim \frac{1}{\abs{v_i}} \sum_{k = 0}^{\abs{v_i}-1} \delta\{S^k (S^{m_{i+1}\abs{u_i}} u)\}.
\]
Measures $\mu_\alpha$ and $\mu_\beta$ have different values ($\alpha$ and $\beta$ respectively) on the cylinder set $\{0\} \times \{0,1\}^\N$. Hence every invariant measure is determined by its value on this cylinder set.

Let $\Lang_n^0$ be the set of subwords of $u$ of length $n$ ending with $0$.  Assuming the limit of the labelled Rauzy digraphs exists, we have the convergence of $\frac{\abs{\Lang_n^0}}{\abs{\Lang_n}}$ to the measure of those digraphs whose unique outgoing edge from the origin is labelled with~$0$.

 The complexity $p(n)$ exhibits different regimes of behavior in which different values of  $\frac{\abs{\Lang_n^0}}{\abs{\Lang_n}}$ may occur.
 First, assume that $n$ is such that  $a_i \coloneqq 2l_i \abs{v_i} < n < \tfrac{m_i \abs{u_i}}{2} \eqqcolon b_i$. Let $w_i$ be the greatest common suffix of $u_i$ and $v_i$ and let $\prefix(x, k)$ denote the prefix of length~$k$ of the word~$x$.
 Let us call a word $x$ \emph{left special} if it is possible to prolong it to the left in at least two different ways. 
The following words are left special and distinct: $\prefix(w_i v_i^{l_i} u_i^{m_i}, n)$, $\prefix(w_i v_i^{n_i -1 }, n)$ and $\prefix(w_i u_i^{m_i -1}, n)$.
We claim that there are no more than three left special words of length $n$, and hence that the above words are all of them.
Indeed, since the language of subwords of~$u$ is prolongable, the difference $p(n + 1) - p(n)\leq 3$ is equal to the number of left special words in $\Lang_n$.
Similarly, the difference $\abs{\Lang_{n+1}^0} - \abs{\Lang_n^0}$ is equal to the number of left special words in $\Lang_n$ that end with~$0$. Applying this to every $n \in [a_i, b_i]$, we obtain 
 \[
 	p(b_i) = p(a_i) + 3 (b_i - a_i) = 3 b_i(1 + o(1)),
\]
where the second equality follows from the specific choice of $l_i$, $m_i$ and $n_i$ and the fact that $\tfrac{\abs{v_i}}{\abs{u_i}}\leq\tfrac{n_{i-1}}{l_{i.1}}$.
We also have
\[
	 \abs{\Lang_{b_i}^0} = \abs{\Lang_{a_i}^0} + \abs{A}_0 + \abs{B}_0 + \abs{C}_0,
\]
 where $A$, $B$, and $C$ are certain $(b_i - a_i)$-subwords of $u_i^{m_i}$, $v_i^{n_i -1 }$ and $u_i^{m_i -1}$ respectively and $\abs{A}_0$ is the number of occurence of the letter $0$ in the word $A$. Therefore,
\[
	 \abs{\Lang_{b_i}^0} = \abs{\Lang_{a_i}^0} + (2\alpha + \beta)(b_i - a_i) (1 + o(1)) = (2\alpha + \beta)b_i(1 + o(1)).
 \]
Finally for this regime we have $\frac{\abs{\Lang_{n}^0}}{\abs{\Lang_{n}}} = (\frac{2}{3}\alpha +  \frac{1}{3}\beta) (1 + o(1))$.

Similar arguments applied to the case $2 m_i\abs{u_i} < n < \tfrac{n_i \abs{v_i}}{2}$ produce the following three left special words: $\prefix(w_i v_i^{l_i}  u_{i+1}^{m_{i+1} - 1}, n)$, $\prefix(w_i v_i^{l_i} u_i^{m_i}v_i^{n_i}, n)$ and $\prefix(w_i v_i^{n_i -1 }, n)$. The same counting argument shows that for this regime we have
\[
	\frac{\abs{\Lang_{n}^0}}{\abs{\Lang_{n}}} = \Bigl(\frac{1}{3}\alpha +  \frac{2}{3}\beta\Bigr) (1 + o(1)).
\]
Therefore, the limit of the labelled Rauzy digraphs can not exist for this symbolic system.

We can consider subsequential limits instead. Such limits can also be identified with invariant measures on the natural extension provided the limit of the unlabelled Rauzy digraphs is $\vec\Z$. Subsequential limits always exist due to the weak compactness of the space of probability measures on a compact metric space of rooted digraphs. Moreover, in this case, we have a continuum of such limits.
In our case, we have only two ergodic invariant  measures $\mu_\alpha$ and $\mu_\beta$, hence every invariant measure is a convex combination $\theta \mu_\alpha + (1 - \theta) \mu_\beta$. We will denote such measure by $\mu_\gamma$, where $\gamma = \theta \alpha + (1 - \theta) \beta$.
\begin{proposition}\label{Prop:CKexample}
    For any $\gamma \in [\frac{1}{3}\alpha +  \frac{2}{3}\beta, \frac{2}{3}\alpha +  \frac{1}{3}\beta]$ the measure $\mu_{\gamma}$ can be obtained as a subsequential limit of labelled Rauzy digraphs.
\end{proposition}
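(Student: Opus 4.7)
The plan is to reduce the proposition to a statement about the frequency $f(n) \coloneqq \frac{\abs{\Lang_n^0}}{\abs{\Lang_n}}$, then combine the two explicit limit computations already carried out in the excerpt with a slow-variation estimate and a discrete intermediate value argument.

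First, by Proposition~\ref{Prop:SubMeasure} together with the fact (already established via Corollary~\ref{Cor:SubshiftCor}) that the unlabelled Rauzy digraphs of this subshift converge to $\vec\Z$, every subsequential Benjamini--Schramm limit of the labelled Rauzy digraphs along any subsequence $(n_k)$ can be identified with a shift-invariant probability measure on the natural extension of the subshift. Since the ergodic invariant probability measures are exactly $\mu_\alpha$ and $\mu_\beta$, every invariant measure is a convex combination $\mu_\gamma$, uniquely determined by its mass on the cylinder $\{0\}\times\{0,1\}^\Z$, and under the Benjamini--Schramm identification this mass equals $\lim_k f(n_k)$ as discussed just before the proposition. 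Hence it suffices to exhibit, for every target $\gamma \in [\tfrac{1}{3}\alpha+\tfrac{2}{3}\beta,\ \tfrac{2}{3}\alpha+\tfrac{1}{3}\beta]$, a sequence $(n_k)$ along which $f(n_k)\to\gamma$; compactness of $\mathbf{P}(\G)$ then yields a further subsequence along which the labelled Rauzy digraphs themselves converge, and the limit is forced to be $\mu_\gamma$.

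The key observation for producing such a sequence is that $f$ varies slowly. Because the language is prolongable, $p(n+1)-p(n)$ equals the number of left-special words of length $n$, which is at most $3$ by~\cite{CK22}; the same bound applies to $\abs{\Lang_{n+1}^0}-\abs{\Lang_n^0}$. Consequently
\[
\abs{f(n+1)-f(n)} = O\!\left(\frac{1}{\abs{\Lang_n}}\right) \longrightarrow 0 \quad\text{as } n\to\infty.
\]
Setting $b_i \coloneqq \tfrac{m_i\abs{u_i}}{2}$ and $c_i \coloneqq 2m_i\abs{u_i}$, the computations in the excerpt give $f(b_i)\to\tfrac{2}{3}\alpha+\tfrac{1}{3}\beta$ and $f(c_i)\to\tfrac{1}{3}\alpha+\tfrac{2}{3}\beta$, and $b_i<c_i$ for $i$ large enough.

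Finally, fix $\gamma$ in the stated interval and $\epsilon>0$. For $i$ sufficiently large, $f(b_i)$ and $f(c_i)$ sit within $\epsilon$ of the two endpoints of that interval (hence on opposite sides of $\gamma$ up to $\epsilon$), while $\abs{f(n+1)-f(n)}<\epsilon$ for every $n\geq b_i$. A discrete intermediate-value argument then produces an index $n_i\in[b_i,c_i]$ with $\abs{f(n_i)-\gamma}<2\epsilon$. Letting $\epsilon\to 0$ along a diagonal yields the desired subsequence $(n_k)$ with $f(n_k)\to\gamma$, and the reduction from the first paragraph concludes. The only delicate point I anticipate is that the transition region $[b_i,c_i]$ is not explicitly analysed in the excerpt; however, the discrete intermediate-value argument uses only the two boundary values of $f$ and the slow-variation estimate, both of which are already in hand.
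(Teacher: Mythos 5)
Your proposal is correct and follows essentially the same route as the paper's own proof: both use the two regime computations to get the endpoint values of $f(n)=\abs{\Lang_n^0}/\abs{\Lang_n}$, the bound $\abs{f(n+1)-f(n)}\leq 3/\abs{\Lang_n}\to 0$, and a discrete intermediate-value argument to hit any interior $\gamma$, then pass to a further subsequence by compactness and identify the limit via the mass of the cylinder with base $0$. You merely spell out more explicitly the reduction through Proposition~\ref{Prop:SubMeasure} and the intermediate-value step that the paper leaves implicit.
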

\begin{proof}
    The existence of the subsequential limits, when $\gamma$ is an endpoint of the interval, follows from the arguments above.
For any $\gamma$ in the interior and for any $N$, we can find some $n > N$ such that
$\bigl|\frac{\abs{\Lang_{n}^0}}{\abs{\Lang_{n}}} - \gamma\bigr| < \frac1N$.
Indeed, the value of $\frac{\abs{\Lang_{n}^0}}{\abs{\Lang_{n}}}$ is oscillating between $\frac{1}{3}\alpha +  \frac{2}{3}\beta$ and $\frac{2}{3}\alpha +  \frac{1}{3}\beta$, and the difference 
$\frac{\abs{\Lang_{n+1}^0}}{\abs{\Lang_{n+1}}} - \frac{\abs{\Lang_{n}^0}}{\abs{\Lang_{n}}}$ is bounded by $\frac{3}{\abs{\Lang_{n}}}$ which tends to zero. Taking a subsequential limit over such $n$'s we obtain the measure $\mu_\gamma$.
\end{proof}

There are several natural questions that arise in view of Propositions~\ref{Prop:CKexample} and~\ref{Prop:SubMeasure}. In the above Cassaigne-Kabor\'e example, can one obtain ergodic measures  $\mu_{\alpha}$ and $\mu_{\beta}$ as subsequential Benjamini--Schramm limits of labelled Rauzy digraphs? More generally, can one get an ergodic measure as such subsequential limit for any minimal subshift with subexponential complexity? Can one characterize geometric properties of the set of all such invariant measures?
%
%
%
%
%
%
%
%
%

\vspace{4ex}
\begin{center}
\textsc{Paul-Henry Leemann, Department of Pure Mathematics, Xi'an Jiaotong--Liverpool University, Suzhou, P.R. China}\\
\textit{E-mail address: }\texttt{PaulHenry.Leemann@xjtlu.edu.cn}\\[2ex]

\textsc{Tatiana Nagnibeda, Section de math\'ematiques, Universit\'e de Gen\`eve, 1205~Gen\`eve, Switzerland}\\
\textit{E-mail address: }\texttt{Tatiana.Nagnibeda@unige.ch}\\[2ex]

\textsc{Alexandra Skripchenko, Faculty of Mathematics, National Research University Higher School of Economics, Usacheva St. 6, 119048 Moscow, Russia \textit{and}
Skolkovo Institute for Science and Technology, Skolkovo Innovation Center, 143026 Moscow, Russia}\\
\textit{E-mail address: }\texttt{sashaskrip@gmail.com}\\[2ex]

\textsc{Georgii Veprev, Section de math\'ematiques, Universit\'e de Gen\`eve, 1205~Gen\`eve, Switzerland}\\
\textit{E-mail address: }\texttt{georgii.veprev@gmail.com}\\[2ex]
\end{center}
\end{document}